\documentclass{amsart}
\usepackage{amssymb,amsmath, amsfonts, amsrefs, tikz, epsfig, float}
\usepackage{amsthm}
\usepackage{mathrsfs}
\usepackage{enumitem, indentfirst}
\usepackage[fleqn,tbtags]{mathtools}
\usepackage{color}
\frenchspacing

\newtheorem{Thm}{Theorem}[section]
\newtheorem{Lem}[Thm]{Lemma}
\newtheorem{Prop}[Thm]{Proposition}

\newtheorem{Cor}[Thm]{Corollary}

\theoremstyle{definition}

\newtheorem{Ex}[Thm]{Example}
\newtheorem{Rem}[Thm]{Remark}

\newtheorem*{Ack}{Acknowledgement}

\numberwithin{equation}{section}
\renewcommand{\epsilon}{\varepsilon}
\newcommand{\bh}{\mc B(\mc H)}
\DeclareMathOperator*{\Ran}{\mathrm{Ran}}
\DeclareMathOperator*{\Ker}{\mathrm{Ker}}

\DeclareMathOperator*{\wlim}{\mathrm{w-lim}}
\DeclareMathOperator*{\wslim}{\mathrm{w*-lim}}

\DeclareMathOperator*{\spanning}{\mathrm{span}}

\DeclareMathOperator*{\conv}{\mathrm{conv}}

\DeclareMathOperator*{\density}{\mathrm{dens}}

\newcommand{\mc}{\mathcal}

\title[Perturbations of epimorphisms]{Perturbations of surjective homomorphisms between algebras of operators on Banach spaces}
\author{Bence Horv\'{a}th and Zsigmond Tarcsay}

\newcommand{\Addresses}{{
  \bigskip
  \footnotesize
  
  \textit{Addresses}: \\
  Bence Horv\'{a}th: \textsc{Institute of Mathematics, Czech Academy of Sciences, \v{Z}itn\'a 25, 115 67 Prague 1, Czech Republic} \\
  Zsigmond Tarcsay: \textsc{
Alfr\'ed R\'enyi Institute of Mathematics, Re\'altanoda utca 13-15., Budapest H-1053,  Hungary and Department of Applied Analysis \\
Computational Mathematics, E\"otv\"os Lor\'and University, P\'azm\'any P\'eter  s\'et\'any 1/c., Budapest H-1117, Hungary}\par\nopagebreak
  \textit{E-mail addresses}: \\
  Bence Horv\'{a}th: \texttt{horvath@math.cas.cz, hotvath@gmail.com} \\
  Zsigmond Tarcsay: \texttt{zsigmond.tarcsay@ttk.elte.hu}
}}
\thanks{The corresponding author B. Horváth acknowledges with thanks the funding received from GA\v{C}R project 19-07129Y; RVO 67985840 (Czech Republic)}
\thanks{Zs. Tarcsay was supported by DAAD-TEMPUS Cooperation Project ``Harmonic Analysis and Extremal Problems'' (grant no. 308015),  by the J\'anos Bolyai Research Scholarship of the Hungarian Academy of Sciences, and by the \'UNKP--20-5-ELTE-185 New National Excellence Program of the Ministry for Innovation and Technology. ``Application Domain Specific Highly Reliable IT Solutions'' project  has been implemented with the support provided from the National Research, Development and Innovation Fund of Hungary, financed under the Thematic Excellence Programme TKP2020-NKA-06 (National Challenges Subprogramme) funding scheme.}
\begin{document}	

\begin{abstract}
A remarkable result of Moln\' ar [\textit{Proc. Amer. Math. Soc.},~126~(1998), 853--861] states that automorphisms of the algebra of operators acting on a separable Hilbert space are stable under ``small'' perturbations. More precisely, if $\phi,\psi$ are endomorphisms of $\bh$ such that $\|\phi(A)-\psi(A)\|<\|A\|$ and $\psi$ is surjective, then so is $\phi$. The aim of this paper is to extend this result to a larger class of Banach spaces including $\ell_p$  and $L_p$ spaces, where $1<p<\infty$.

\textit{En route} to the proof we show that for any Banach space $X$ from the above class all faithful, unital, separable, reflexive representations of $\mc B (X)$ which preserve rank one operators are in fact isomorphisms.
\end{abstract}

\maketitle

\let\thefootnote\relax\footnote{\subjclass{2020}{\textit{ Mathematics Subject Classification.} Primary 46H10, 47L10; Secondary 46B03, 46B07, 46B10, 47L20}

\keywords{\textit{Key words and phrases.} Algebra of operators, Banach space, basis, character, Gowers' space, Haar basis, homomorphism, isomorphism, James space, perturbation, representation, Semadeni space, subsymmetric basis}}

\section{Introduction and statement of main results}

It is a well known corollary of the Carl Neumann series that if $\psi$ and $\phi$ are endomorphisms of a Banach algebra, and $\psi$ is an automorphism with $\| \psi - \phi \| < 1/ \| \psi^{-1} \|$, then $\phi$ is an automorphism too. Motivated by this fact, Moln\'ar proved in \cite{molnar2} that in fact a sharper version of this result holds for $\mc B (\mc H)$, the $C^*$-algebra of bounded linear operators on a separable Hilbert space $\mc H$. More precisely, he showed in \cite{molnar2}*{Theorem~1} that if $\phi, \psi \colon \mathcal{B}(\mc H) \rightarrow \mathcal{B}(\mathcal{H})$ are algebra homomorphisms such that $\psi$ is surjective and $\| \psi(A) - \phi(A) \| < \|A\|$ for each non-zero $A \in \mathcal{B}(\mathcal{H})$, then $\phi$ is also surjective. Let us remark here that $\psi$ and $\phi$ are automatically continuous, and their surjectivity implies their injectivity, as shown in the proof of \cite{molnar2}*{Theorem~1}, for example. The main tool in Moln\'ar's proof is a previous, deep result of his from \cite{molnar1}.

The purpose of this paper is to extend \cite{molnar2}*{Theorem~1}  for a large class of (non-hilbertian) Banach spaces; see Theorem \ref{stability}. \textit{En route} to this we shall prove a theorem about certain faithful representations of $\mc B (X)$, where $X$ is a Banach space from the same class; see Theorem \ref{main}. We believe this result to be of independent interest, since the study of faithful, separable representations of $\mc B (X)$ goes back to the seminal work of Berkson and Porta in \cite{berksonporta}. Our main results are the following:

\begin{Thm}\label{main}
	Let $X$ and $Y$ be non-zero Banach spaces such that $Y$ is separable and reflexive. Assume $X$ satisfies one of the following:
	\begin{enumerate}[label=\textup{(\arabic*)}]
	\item $X=L_p[0,1]$, where $1<p<\infty$; or
	\item $X$ is a reflexive Banach space with a subsymmetric Schauder basis.
	\end{enumerate}
	Let $\phi \colon \mathcal{B}(X) \rightarrow \mathcal{B}(Y)$ be a continuous, injective algebra homomorphism. If $\Ran(\phi)$ contains an operator with dense range, and $\phi$ maps rank one idempotents into rank one idempotents, then $\phi$ is an isomorphism.	
\end{Thm}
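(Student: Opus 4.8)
The plan is to realise $\phi$ as a \emph{spatial} isomorphism, i.e. to produce a bounded bijection $T\colon X\to Y$ with $\phi(A)=TAT^{-1}$. First I would fix a rank one idempotent $P_0=x_0\otimes f_0$ (so $f_0(x_0)=1$) and write its image as $\phi(P_0)=y_0\otimes g_0$ with $g_0(y_0)=1$. Since $\phi$ is multiplicative it carries the left ideal $\mc B(X)P_0=\{x\otimes f_0:x\in X\}$ into $\mc B(Y)\phi(P_0)=\{y\otimes g_0:y\in Y\}$, which lets me define $T\colon X\to Y$ by $\phi(x\otimes f_0)=Tx\otimes g_0$; symmetrically, from the right ideal $P_0\mc B(X)=\{x_0\otimes f:f\in X^{*}\}$ I define $T'\colon X^{*}\to Y^{*}$ by $\phi(x_0\otimes f)=y_0\otimes T'f$. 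Continuity of $\phi$ makes both maps bounded. Factoring $x\otimes f=(x\otimes f_0)(x_0\otimes f)$ yields the key identity $\phi(x\otimes f)=Tx\otimes T'f$, together with the intertwining relations $\phi(A)T=TA$ and $T'A^{*}=\phi(A)^{*}T'$; squaring $x\otimes f$ and invoking injectivity of $\phi$ gives the duality $\langle Tx,T'f\rangle=\langle x,f\rangle$ for all $x,f$, and in particular $T,T'$ are injective.

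The duality says precisely that $T^{*}T'=\id_{X^{*}}$, so $T^{*}$ is surjective and hence $T$ is bounded below; thus $T$ is an isomorphism onto the \emph{closed} subspace $Z:=\Ran(T)$. Next I would check that $\phi$ is unital: writing $S=\phi(A_0)$ for the operator with dense range and $Q=\phi(I)$, multiplicativity gives $QS=\phi(IA_0)=S$, so $\Ran(S)\subseteq\Ran(Q)$; as $Q$ is idempotent its range is closed, whence $\Ran(Q)\supseteq\overline{\Ran(S)}=Y$ and $Q=\id_Y$.

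Everything then reduces to proving $Z=Y$. The relation $\phi(A)T=TA$ makes $Z$ invariant under $\phi(\mc B(X))$, so $\phi$ induces a unital homomorphism $\bar\phi\colon\mc B(X)\to\mc B(Y/Z)$, with $Y/Z$ again separable (and reflexive). Because $\phi(x\otimes f)=Tx\otimes T'f$ has range inside $Z$, the map $\bar\phi$ annihilates every rank one — and hence every finite rank — operator. (If one prefers, reflexivity of $Y$ lets one replace the quotient by a genuine complemented reducing subspace via the idempotent $T(T')^{*}$, which I expect commutes with $\phi(\mc B(X))$.) So it suffices to show that, for the spaces $X$ under consideration, there is no nonzero unital homomorphism from $\mc B(X)$ into $\mc B(W)$ with $W$ separable that kills the finite rank operators.

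This final step is where hypotheses (1) and (2) do the real work, and is the point I expect to be the main obstacle. The plan is to manufacture an uncountable, almost orthogonal family of ``large'' idempotents. In case (2), with subsymmetric basis $(e_n)$, I would take an almost disjoint family $(M_\gamma)_{\gamma\in\Gamma}$ of infinite subsets of $\mathbb N$ and the basis projections $P_{M_\gamma}$ onto $\overline{\spanning}\{e_n:n\in M_\gamma\}$: these have norms bounded by the unconditional constant, their ranges are complemented copies of $X$ by subsymmetry, and $P_{M_\gamma}P_{M_{\gamma'}}=P_{M_\gamma\cap M_{\gamma'}}$ is finite rank. In case (1) I would realise $L_p[0,1]$ as $L_p(\{0,1\}^{\mathbb N})$ and, for an almost disjoint family $(M_\gamma)$ of coordinate sets, take the conditional expectations $E_{M_\gamma}$ onto the coordinates in $M_\gamma$: these are contractive, their ranges are complemented copies of $L_p$, and $E_{M_\gamma}E_{M_{\gamma'}}=E_{M_\gamma\cap M_{\gamma'}}$ is finite rank because $\{0,1\}^{\textup{finite}}$ is a finite set. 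In either case the idempotent has range isomorphic to $X$ and complemented, so it generates $\mc B(X)$ as a two sided ideal; hence $\bar\phi$ cannot kill it (else $\bar\phi(I)=0$), whereas $\bar\phi$ does kill every pairwise product. Thus the images would form uncountably many nonzero, uniformly bounded, pairwise orthogonal idempotents in $\mc B(W)$; choosing unit vectors in their ranges yields an uncountable uniformly separated subset of the separable space $W$, which is impossible unless $W=0$, i.e. $Z=Y$. Then $T$ is a bounded bijection, $\phi(A)=TAT^{-1}$, and $\phi$ is an isomorphism. The delicate points I anticipate are the composition law and finite rank of the cross terms in case (1), and verifying that each ``diagonal'' idempotent genuinely generates the whole algebra so that its image is nonzero.
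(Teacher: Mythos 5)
Your proposal is correct, and although it rests on the same two pillars as the paper --- the spatial intertwiner coming from a rank one idempotent (your $T$ coincides with the paper's $S \colon x \mapsto \phi(x\otimes f_0)y_0$), and the continuum-sized family of almost orthogonal idempotents with ranges isomorphic to $X$ (basis projections over an almost disjoint family, resp.\ conditional expectations, exactly as in Proposition~\ref{exidemp}) played off against separability (Lemma~\ref{distofidemps}) --- it carries out the two hard technical steps by genuinely different means. First, where the paper proves that $S$ has closed range via weak Cauchy sequences and weak sequential completeness of the reflexive space $X$, you build the second intertwiner $T'$ from the right ideal $P_0\mc B(X)$, derive $\langle Tx,T'f\rangle=\langle x,f\rangle$ from injectivity and multiplicativity, and conclude $\|x\|\le \|T'\|\,\|Tx\\|$; this is cleaner and uses no reflexivity at all. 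Second, where the paper obtains surjectivity of the intertwiner by exploiting that $\mc B(Y)$ is a dual Banach algebra (this is precisely where reflexivity of $Y$ enters): it forms the weak$^*$/ultrafilter limit $P$ of $(\phi(P_n))$, kills the corner homomorphism $(I_Y-P)\phi(\cdot)(I_Y-P)$ via the dichotomy Proposition~\ref{injorzero}, deduces $\phi(P_n)\to I_Y$ strongly, and only then gets density of $\Ran(S)$ using rank one idempotent preservation along the entire basis; you instead pass to the quotient $Y/\Ran(T)$, note that the induced unital homomorphism annihilates every finite rank operator, and run the idempotent/separability argument there to force $Y/\Ran(T)=0$. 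Your route is more elementary (no ultrafilters, dual Banach algebras, or Mazur-type closure arguments), needs the rank one idempotent hypothesis only at the single idempotent $P_0$, and never invokes reflexivity of $X$ or $Y$ (only separability of $Y$), so it actually proves a formally stronger statement --- it would cover, e.g., $X=L_1[0,1]$, $c_0$ or $\ell_1$ verbatim. What the paper's longer route buys is reusable intermediate output: the strong convergence $\phi(P_n)\to I_Y$ and the free-standing dichotomy Proposition~\ref{injorzero}, which the paper needs again in the proof of Theorem~\ref{stability} (to see that the surjective $\psi$ is injective). The two points you flag as delicate are fine and are settled exactly as you anticipate: the composition law $\mathbb{E}(\cdot\vert\Lambda_N)\,\mathbb{E}(\cdot\vert\Lambda_M)=\mathbb{E}(\cdot\vert\Lambda_{N\cap M})$ with finite-dimensional range for $N\cap M$ finite is Fremlin's Theorem~254Ra, and non-vanishing of each image holds because $\Ran(P_\gamma)\simeq X$ gives $P_\gamma\sim I_X$ by Lemma~\ref{equivisom}, so annihilating $P_\gamma$ would annihilate the identity.
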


From the above theorem we will deduce a generalisation of \cite{molnar2}*{Theorem~1} :

\begin{Thm}\label{stability}
	Let $X$ and $Y$ be Banach spaces as in Theorem \ref{main}. Let $\psi, \phi \colon \mc B (X) \rightarrow \mc B (Y)$ be algebra homomorphisms such that $\psi$ is surjective. If
	\begin{align}\label{mainineq}
	\| \psi(A) - \phi(A) \| < \| A \|
	\end{align}
	for each non-zero $A \in \mc B (X)$, then $\phi$ is an isomorphism.
\end{Thm}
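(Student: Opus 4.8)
The plan is to deduce Theorem \ref{stability} from Theorem \ref{main} by verifying, for the homomorphism $\phi$, the four hypotheses of the latter: continuity, injectivity, the presence of an operator with dense range in $\Ran(\phi)$, and preservation of rank one idempotents. The engine throughout is the strict inequality \eqref{mainineq}, which I would exploit by feeding it carefully chosen operators $A$ of norm one.

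First I would record two consequences of surjectivity. Since $\psi$ is onto, $\psi(I_X)$ acts as the identity on $\Ran(\psi)=\mc B(Y)$, so $\psi(I_X)=I_Y$ and $\psi$ is unital. By the automatic continuity of homomorphisms out of $\mc B(X)$ for the spaces under consideration, $\psi$ is continuous, and then \eqref{mainineq} forces $\|\phi(A)\|\le\|\psi(A)\|+\|A\|\le(\|\psi\|+1)\|A\|$, so $\phi$ is continuous as well. Next, taking $A=I_X$ in \eqref{mainineq} gives $\|I_Y-\phi(I_X)\|<\|I_X\|=1$, so $\phi(I_X)$ is invertible by the Neumann series; being also idempotent (as $\phi$ is a homomorphism and $I_X^2=I_X$), it must equal $I_Y$. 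Hence $\phi$ is unital, and in particular $I_Y\in\Ran(\phi)$ is an operator with dense range, settling the third hypothesis at once.

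The substantive structural point is that $\psi$ carries rank one idempotents to rank one idempotents. Rank one idempotents are precisely the minimal idempotents of $\mc B(X)$, characterised algebraically by $P\mc B(X)P=\mathbb CP$, so it suffices to show $\ker(\psi)=0$: any nonzero closed two-sided ideal contains $\mc K(X)$ and hence every rank one idempotent, whereas $\psi$, being onto $\mc B(Y)$, cannot annihilate a minimal idempotent without forcing $\mc B(Y)$ to be a quotient of the Calkin-type algebra $\mc B(X)/\mc K(X)$, which possesses no minimal idempotents. I expect this ideal-theoretic step --- that a surjective homomorphism $\mc B(X)\to\mc B(Y)$ is injective for the spaces in question --- to be the main obstacle, since it is exactly where the special structure of $X$ (its ideal lattice and the behaviour of idempotents modulo the compact operators) enters. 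Granting it, $\psi$ is an isomorphism and $\psi(P)$ is a rank one idempotent for every rank one idempotent $P$.

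With $\psi(P)$ known to be rank one, I would treat injectivity of $\phi$ and rank one preservation together. For a norm one rank one idempotent $P_0$ (produced from a norm one vector and a norm one supporting functional via Hahn--Banach), \eqref{mainineq} gives $\|\psi(P_0)-\phi(P_0)\|<1$, and the elementary perturbation fact that an idempotent $Q$ lying within distance $1$ of a rank one idempotent $R$ is itself rank one --- proved by noting that if $\Ran(Q)$ had dimension at least two it would contain a nonzero $v$ with $Qv=v$ and $Rv=0$, whence $\|(Q-R)v\|=\|v\|$ --- shows $\phi(P_0)$ is rank one. In particular $\phi(P_0)\ne 0$, so $\ker(\phi)$ contains no rank one idempotent and therefore $\ker(\phi)=0$, giving injectivity. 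To pass from norm one idempotents to arbitrary rank one idempotents I would use that any two rank one idempotents of $\mc B(X)$ are similar: writing $P=SP_0S^{-1}$ with $S\in\mc B(X)$ invertible and using that $\phi$ is unital (so $\phi(S)$ is invertible with inverse $\phi(S^{-1})$), one gets $\phi(P)=\phi(S)\phi(P_0)\phi(S)^{-1}$, which is similar to the rank one idempotent $\phi(P_0)$ and hence again rank one. This verifies the final hypothesis, and Theorem \ref{main} then yields that $\phi$ is an isomorphism.
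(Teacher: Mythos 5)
Your overall strategy---deducing the theorem by verifying the four hypotheses of Theorem~\ref{main} for $\phi$---is the same as the paper's, and several of your steps are correct and even attractive: the unitality of $\psi$ and then of $\phi$ via the Neumann series, continuity of $\phi$ from automatic continuity of $\psi$ together with \eqref{mainineq}, the elementary rank estimate showing that a (necessarily non-zero) idempotent within distance $1$ of a rank one idempotent is itself rank one (a nice substitute for the paper's combination of Zem\'anek's lemma, Lemma~\ref{equivisom} and Eidelheit's theorem), and the similarity argument passing from norm one rank one idempotents to arbitrary ones. However, the step you yourself flag as the main obstacle---injectivity of the surjective homomorphism $\psi$---is exactly where your sketch breaks down, and everything downstream (rank one preservation by $\psi$, hence by $\phi$, hence injectivity of $\phi$) depends on it.

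The flawed inference is: ``if $\Ker(\psi) \neq 0$ then $\mc B(Y)$ is a quotient of $\mc B(X)/\mc K(X)$, which possesses no minimal idempotents, contradiction.'' First, absence of minimal idempotents does not pass to quotients: $C[0,1]$ has no minimal idempotents (its only idempotents are $0$ and $1$, and $1$ is not minimal since $C[0,1] \neq \mathbb{C}$), yet evaluation at a point maps it onto $\mathbb{C}$, whose unit is minimal. Second, the paper's own Example~\ref{nonex} shows that this failure occurs in precisely the present setting: for $X = \mathcal{J}_p$, $X_{GM}$ or Gowers' space $\mc G$, the algebra $\mc B(X)$ admits a character $\chi$, i.e.\ a surjective, non-injective homomorphism onto $\mc B(\mathbb{C})$ with $\mathbb{C}$ separable and reflexive; its kernel contains $\mc K(X)$, so $\mathbb{C}$---which certainly has a minimal idempotent---\emph{is} a quotient of the Calkin algebra $\mc B(X)/\mc K(X)$. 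Thus ``surjective $\Rightarrow$ injective'' is genuinely false for general $X$, and no argument like yours, which beyond the unproved assertion about the Calkin algebra uses nothing but generic ideal theory, can establish it. The paper proves injectivity of $\psi$ via Proposition~\ref{injorzero}: if $\Ker(\psi) \neq 0$ then $\mc K(X) \subseteq \Ker(\psi)$, and Proposition~\ref{exidemp} supplies continuum many commuting, almost orthogonal idempotents $P_i$ with $\Ran(P_i) \simeq X$; each $P_i \sim I_X$ by Lemma~\ref{equivisom}, so $P_i \notin \Ker(\psi)$ by Remark~\ref{idempinid}, while $\psi(P_i)\psi(P_j) = \psi(P_iP_j) = 0$ for $i \neq j$ since $P_iP_j$ is finite rank. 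This yields continuum many pairwise orthogonal non-zero idempotents in $\mc B(Y)$, contradicting separability of $Y$ via Lemma~\ref{distofidemps}. This is where the hypotheses on $X$ (subsymmetric basis, or $L_p[0,1]$) do real work; your proof needs this dichotomy, or an equivalent argument, to be complete.
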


As one might expect, there is no hope for Theorem~\ref{main} to hold in general for arbitrary Banach spaces $X$ and $Y$. To be precise, we prove the following:

\begin{Prop}\label{counterexample}
Let $X$ be a Banach space such that $\mc B (X)$ has a character. Let $Z$ be any non-zero Banach space. There is a continuous, injective algebra homomorphism $\phi \colon \mc B (X) \rightarrow \mc B(X \oplus Z)$ with $\phi(I_{X}) = I_{X \oplus Z}$ which maps rank one operators into rank one operators but $\phi$ is not surjective.

In particular, let $X$ be the $p^{th}$ James space $\mathcal{J}_p$ (where $1<p< \infty$) or the Semadeni space $C[0, \omega_1]$. There is a continuous, injective algebra homomorphism $\phi \colon \mc B (X) \rightarrow \mc B(X)$ with $\phi(I_{X}) = I_{X}$ which maps rank one operators into rank one operators but $\phi$ is not surjective.
\end{Prop}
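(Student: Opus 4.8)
The plan is to build $\phi$ directly from a character. Let $\chi \colon \mc B(X) \to \mathbb{C}$ be a character; since $\mc B(X)$ is unital we have $\chi(I_X) = 1$, and $\chi$ is automatically contractive, so $|\chi(T)| \le \|T\|$. I would then define $\phi \colon \mc B(X) \to \mc B(X \oplus Z)$ by
\[
\phi(T)(x,z) = (Tx, \chi(T)z) \qquad (x \in X,\ z \in Z).
\]
Linearity and multiplicativity of $\phi$ follow at once from those of $T \mapsto T$ and of $\chi$; one has $\phi(I_X) = I_{X \oplus Z}$ because $\chi(I_X) = 1$; and $\|\phi(T)\| \le \|T\|$ for the sum norm on $X \oplus Z$, so $\phi$ is continuous. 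Finally $\phi$ is injective, since $\phi(T) = 0$ forces $Tx = 0$ for every $x$, i.e. $T = 0$. Thus the only two nontrivial points are that $\phi$ carries rank one operators to rank one operators and that $\phi$ is not surjective.

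For the rank one claim, the key observation is that $\chi$ must vanish on every rank one operator; this is where the bulk of the (short) argument lies. First, any rank one operator $R = f \otimes x_0$ with $f(x_0) = 0$ satisfies $R^2 = 0$, whence $\chi(R)^2 = \chi(R^2) = 0$ and $\chi(R) = 0$. For a rank one idempotent $P = f \otimes x_0$ with $f(x_0) = 1$ I would pick (using $\dim X \ge 2$, which is forced here) a vector $x_1 \in \Ker f \setminus \{0\}$ and, by Hahn--Banach, a functional $g \in X^*$ with $g(x_0) = 0$ and $g(x_1) = 1$. Writing $A = f \otimes x_1$ and $B = g \otimes x_0$, both $A$ and $B$ are nilpotent rank one operators, so $\chi(A) = \chi(B) = 0$, while a direct computation gives $BA = P$. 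Hence $\chi(P) = \chi(B)\chi(A) = 0$. As a general rank one operator is either nilpotent or a scalar multiple of a rank one idempotent, $\chi$ vanishes on all rank one operators. Consequently, for rank one $R$ we get $\phi(R) = R \oplus 0$, which is again rank one.

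Non-surjectivity is then immediate: every element of $\Ran(\phi)$ acts on the second coordinate as the scalar $\chi(T)$, so the idempotent $I_X \oplus 0$ cannot lie in the range. Indeed, $\phi(T) = I_X \oplus 0$ would force $T = I_X$ from the first coordinate and $\chi(T) = 0$ (as $Z \ne \{0\}$) from the second, contradicting $\chi(I_X) = 1$. This settles the general statement.

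For the concluding ``in particular'' assertion I would invoke the known facts that both $\mc B(\mathcal{J}_p)$ (via the one-dimensional quotient $\mathcal{J}_p^{**}/\mathcal{J}_p$, on which every bidual $T^{**}$ acts as a scalar) and $\mc B(C[0,\omega_1])$ admit characters, and that each of these spaces is isomorphic to its hyperplanes: $\mathcal{J}_p \cong \mathbb{C} \oplus \mathcal{J}_p$ via the shift on the canonical basis, and $C[0,\omega_1] \cong \mathbb{C} \oplus C[0,\omega_1]$ via the clopen splitting $[0,\omega_1] = \{0\} \sqcup [1,\omega_1]$ together with the homeomorphism $[1,\omega_1] \cong [0,\omega_1]$. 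Taking $Z = \mathbb{C}$ and fixing an isomorphism $U \colon X \to X \oplus \mathbb{C}$, the map $\Theta(S) = U^{-1} S U$ is a rank-preserving, unital algebra isomorphism $\mc B(X \oplus \mathbb{C}) \to \mc B(X)$, and $\Theta \circ \phi$ is the desired endomorphism of $\mc B(X)$: it inherits continuity, injectivity, unitality and rank one preservation, and it fails to be surjective because $\Theta$ is bijective while $\phi$ is not onto. I expect the main obstacle to be not the construction itself but the verification of these external inputs for the concrete spaces — the existence of the characters and the hyperplane isomorphisms — together with the point that $\chi$ genuinely annihilates rank one operators, which is precisely what makes rank one preservation work.
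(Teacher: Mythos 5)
Your proposal is correct, and its core is exactly the paper's construction: $\phi(T) = T \oplus \chi(T) I_Z$, with unitality, continuity, injectivity and non-surjectivity verified in the same way. The genuine differences lie in the two supporting facts. For rank one preservation, the paper simply invokes that $\mc F(X)$ is the smallest non-trivial two-sided ideal of $\mc B(X)$, so $\mc F(X) \subseteq \Ker(\chi)$; you instead prove by hand that a character annihilates rank one operators (square-zero rank one operators are killed because $\chi(R)^2 = \chi(R^2) = 0$, and every rank one idempotent factors as a product of two such operators). Your argument is more self-contained, but note that it needs $\dim X \geq 2$ --- the same tacit restriction under which the paper's ideal-theoretic argument (and indeed the proposition itself) is valid, since for one-dimensional $X$ the kernel of a character contains no non-zero ideal. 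For the ``in particular'' part, both you and the paper take $Z = \mathbb{C}$ and transport $\phi$ through an isomorphism $X \simeq X \oplus \mathbb{C}$; you justify the hyperplane isomorphisms differently: for $\mathcal{J}_p$ via the shift on the canonical basis, where the paper uses quasi-reflexivity ($\mathcal{J}_p \simeq \mathcal{J}_p^{**} \simeq \mathcal{J}_p \oplus \mathbb{C}$), and for $C[0,\omega_1]$ via the clopen splitting $[0,\omega_1] = \lbrace 0 \rbrace \sqcup [1,\omega_1]$ together with the order isomorphism $[1,\omega_1] \cong [0,\omega_1]$, where the paper cites a complemented copy of $c_0$. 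Your ordinal argument is arguably more elementary than the paper's citation of \cite{acgjm}; conversely, your shift argument for $\mathcal{J}_p$ requires verifying boundedness of the shift in the James norm, a detail the paper's bidual route avoids entirely. Both routes for the character on $\mc B(\mathcal{J}_p)$ (action of $T^{**}$ on the one-dimensional quotient $\mathcal{J}_p^{**}/\mathcal{J}_p$) agree with the cited source, and like the paper you simply cite the existence of a character on $\mc B(C[0,\omega_1])$.
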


The necessary terminology will be explained in the subsequent sections.

The paper is structured as follows. Section~2 contains a brief overview of the concepts and notation needed to understand the paper. In Section~3 we develop some auxiliary tools which will feature heavily in our arguments later. Section~4 is devoted to the proofs of Theorems~\ref{main},~\ref{stability} and Proposition~\ref{counterexample}. We conclude Section~4 with some remarks about the possibility of weakening the assumptions in Theorem~\ref{main}.

\section{Preliminaries}\label{sec:prelin}

The notation and terminology used throughout this paper are standard.  

\subsubsection{Numbers and sets}

The first infinite cardinal is denoted by $\aleph_0$ and we refer to the cardinal $2^{\aleph_0}$ as the \textit{continuum}. If $X$ is a set, then $\mathcal{P}(X)$ denotes its power set, and $\vert X \vert$ denotes the cardinality of $X$. If $X$ and $Y$ are sets, then $Y^X$ is the set of functions from $X$ to $Y$. 

Let $\Gamma$ be a set. A family $\mathcal{F} \subseteq \mathcal{P}(\Gamma)$ is called \textit{almost disjoint} if for any distinct $A,B \in \mathcal{F}$ the set $A \cap B$ is finite. There exists an almost disjoint family of continuum cardinality consisting of infinite subsets of the natural numbers. For a proof we refer the reader to \emph{e.g.} \cite{albiackalton}*{Lemma~2.5.3}.

\subsubsection{Ultrafilters, ultralimits}

If $\mathcal{F}$ is a filter on a set $X$ and $\mathcal{U}$ is an ultrafilter on $X$ with $\mathcal{F} \subseteq \mathcal{U}$, then we say that \textit{$\mathcal{U}$ extends $\mathcal{F}$}. As a corollary of Zorn's Lemma any filter can be extended to an ultrafilter.

Let $X$ be a topological space and let $x \in X$. Let $(x_i)_{i \in I}$ be a family of elements of $X$ and let $\mathcal{U}$ be an ultrafilter on $I$. If $(x_i)_{i \in I}$ converges to $x$ along $\mathcal{U}$, then we will denote this by $x= \lim_{i \rightarrow \mathcal{U}} x_i$.
Let $X$ be a compact Hausdorff space, and let $(x_i)_{i \in I}$ be a family of elements of $X$. If $\mathcal{U}$ is an ultrafilter on $I$, then the ultralimit $\lim_{i \rightarrow \mathcal{U}} x_i$ exists and it is unique (see \emph{e.g.} \cite{Alip}*{Lemma~1.5.9}). If $X$ and $Y$ are topological spaces and $f \colon X\to Y$ is a continuous function, then $\lim_{i\to\mc U}x_i=x$ implies $\lim_{i\to\mc U} f(x_i)=f(x)$.  

There is a standard way of connecting convergence in a topological space with convergence along certain ultrafilters. Let $I$ be a directed set. We define $A_i= \lbrace j \in I \colon j \geq i \rbrace$ for every $i \in I$. Then the set
\begin{align*}
	\mathcal{F}_{\text{ord}}= \lbrace S \in \mathcal{P}(I) \colon A_i \subseteq S \text{ for some } i \in I \rbrace
\end{align*}
is easily seen to be a filter on $I$, called the \textit{order filter}.	

Let $X$ be a topological space and let $(x_i)_{i \in I}$ be a family of elements of $X$ which converges to $x \in X$. If $\mathcal{U}$ is an ultrafilter on $I$ with $\mathcal{F}_{\text{ord}} \subseteq \mathcal{U}$, then $x= \lim_{i \rightarrow \mathcal{U}} x_i$.

\subsection{Background material on Banach spaces and Banach algebras}

In this paper all Banach spaces and Banach algebras are assumed to be complex.

\subsubsection{The dual space and the $\text{weak}^*$ topology}

If $X$ is a Banach space, then for its dual space we write $X^*$. In the following $\langle \cdot , \cdot \rangle$ denotes the natural duality pairing; that is, $\langle x, f \rangle = f(x)$ whenever $x \in X$ and $f \in X^*$. The $\text{weak}^*$ topology on $X^*$ is denoted by $\sigma(X^*,X)$.

\subsubsection{Operators on Banach spaces}

The identity operator on a vector space $X$ is denoted by $I_X$. If $X$ and $Y$ are normed spaces, then $\mathcal{B}(X,Y)$ denotes the normed space of bounded linear operators from $X$ to $Y$. We denote $\mathcal{B}(X,X)$ simply by $\mathcal{B}(X)$. For $T \in \mathcal{B}(X,Y)$ its adjoint is denoted by $T^*$. If $Z$ is a linear subspace of $X$, then for $T \in \mathcal{B}(X,Y)$ we denote the restriction of $T$ to $Z$ by $T \vert_Z$; clearly $T \vert_Z \in \mathcal{B}(Z,Y)$.

Let $X$ and $Y$ be normed spaces. Let $y \in Y$ and let $f \in X^*$. We define $y \otimes f$ by $(y \otimes f)(x) = \langle x, f \rangle y$. It is clear that $y \otimes f \in \mathcal{B}(X,Y)$ is rank one with $\Vert y \otimes f \Vert = \Vert y \Vert \Vert f \Vert$, whenever $y \in Y$ and $f \in X^*$ are non-zero.

Two Banach spaces $X$ and $Y$ are said to be \textit{isomorphic} if there is a linear homeomorphism between $X$ and $Y$; it will be denoted by $X \simeq Y$.

\subsubsection{Banach algebras, idempotents}

By an \textit{isomorphism of Banach algebras} $\mc A$ and $\mc B$ we understand that there is an algebra homomorphism between $\mc A$ and $\mc B$ which is also a homeomorphism. This will also be denoted by $\mc A \simeq \mc B$.

In an algebra $\mc A$ an element $p \in \mc A$ is an \emph{idempotent} if $p^2 =p$. Two idempotents $p,q \in \mc A$ are \textit{orthogonal} if $qp=0=pq$. We say that two idempotents $p,q \in \mc A$ are \emph{algebraically Murray--von Neumann equivalent} or simply \emph{equivalent}, and denote it by $p \sim q$, if there exist $a,b \in \mc A$ such that $ab=p$ and $ba=q$. For idempotents $p,q \in \mc A$ we write $p \leq q$ whenever $pq=p=qp$. Clearly $\leq$ is a partial ordering on the set of idempotents of $\mc A$.

We recall a folklore result, a stronger version of which was proved by Zem\'{a}nek in \cite{zemanek0}*{Lemma~3.1}. A self-contained elementary proof can be found in \cite{dawshorvath}*{Lemma 2.8}.

\begin{Lem}\label{zemanek}
	Let $\mathcal{A}$ be a unital Banach algebra. Let $p,q \in \mathcal{A}$ be idempotents with $\Vert p -q \Vert <1$. Then $p \sim q$.
\end{Lem}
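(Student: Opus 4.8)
The plan is to realise $p \sim q$ by conjugating $p$ onto $q$ with an invertible element of $\mc A$, and then to read off the required factorisation; the hypothesis $\|p-q\|<1$ will be used only once, to guarantee invertibility. First I would set $c = 1 - (p-q)^2$. Since $\|(p-q)^2\| \le \|p-q\|^2 < 1$, the Carl Neumann series shows that $c$ is invertible in $\mc A$. A direct expansion of $(p-q)^2 = p - pq - qp + q$ gives
\[
p(p-q)^2 = p - pqp = (p-q)^2 p, \qquad q(p-q)^2 = q - qpq = (p-q)^2 q,
\]
so both $p$ and $q$ commute with $(p-q)^2$, hence with $c$ and with $c^{-1}$.

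Next I would introduce
\[
u = pq + (1-p)(1-q), \qquad v = qp + (1-q)(1-p),
\]
and verify, using only $p^2 = p$, $q^2 = q$ (so that $(1-p)p = 0$ and $(1-q)q = 0$), that $uv = 1 - (p-q)^2 = c$; the symmetry $p \leftrightarrow q$ then yields $vu = c$ as well. Since $c$ is invertible and commutes with $u$ and $v$, this shows that $u$ is invertible with $u^{-1} = c^{-1} v = v c^{-1}$.

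The decisive step is the intertwining identity: expanding $pu$ and $uq$ and again using $(1-p)p = 0$ and $(1-q)q = 0$, both products collapse to $pq$, so that $pu = uq$, that is, $q = u^{-1} p u$. Setting $a = pu$ and $b = u^{-1} p$ I would then obtain
\[
ab = p u u^{-1} p = p^2 = p, \qquad ba = u^{-1} p^2 u = u^{-1} p u = q,
\]
which are exactly the elements witnessing $p \sim q$ in the sense of the definition above.

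I do not expect any serious obstacle here, as the whole argument reduces to elementary manipulations with idempotents. The one computation that must be done with care is the pair of product identities $uv = vu = c$, since these are what make $u$ invertible, and it is precisely at this point that the norm hypothesis $\|p-q\|<1$ is consumed (through the invertibility of $c$). Everything else is forced by the idempotent relations $p^2=p$ and $q^2=q$.
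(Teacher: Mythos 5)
Your argument is correct and complete. All the computations check out: $(p-q)^2$ commutes with both $p$ and $q$; the cross terms in $uv$ vanish because $q(1-q)=0$ and $(1-q)q=0$, giving $uv=vu=1-(p-q)^2=c$; and $pu=uq=pq$, so with $a=pu$ and $b=u^{-1}p$ one gets $ab=p^2=p$ and $ba=u^{-1}pu=q$, which is exactly the definition of $p\sim q$ used in the paper. The only step you leave implicit --- that $c$ commutes with $u$ and $v$ --- is immediate, since $c$ commutes with $p$ and $q$ and hence with any sum of products of $p$, $q$ and $1$. Bear in mind, though, that the paper does not prove this lemma at all: it quotes it as folklore, attributing a stronger version to Zem\'{a}nek and pointing to Daws--Horv\'{a}th for a self-contained elementary proof, so the comparison here is between your self-contained argument and a citation. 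What your route buys is precisely that stronger statement: you produce an invertible $u$ with $q=u^{-1}pu$, i.e.\ $p$ and $q$ are \emph{similar}, not merely algebraically Murray--von Neumann equivalent, and equivalence is read off as a trivial corollary. This is the classical conjugation trick, and for the purposes of the paper (where only $\sim$ is needed, in Proposition~\ref{corzemanek}) either your proof or the cited one does the job; yours has the added benefit of keeping the paper self-contained and of recording the sharper similarity conclusion.
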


Let $X$ be a Banach space. Two idempotents $P,Q \in \mathcal{B}(X)$ are said to be \textit{almost orthogonal} if $PQ$ and $QP$ are finite-rank operators.

The following lemma is well-known; see for example \cite{ringfinofopalgs}*{Lemma~1.4}.
\begin{Lem}\label{equivisom}
	Let $X_1$ and $X_2$ be Banach spaces. Let $P \in \mathcal{B}(X_1)$ and $Q \in \mathcal{B}(X_2)$ be idempotents. Then $\Ran(P) \simeq \Ran(Q)$ as Banach spaces if and only if there exist $U \in \mathcal{B}(X_2,X_1)$ and $V \in \mathcal{B}(X_1,X_2)$ with $P = U   V$ and $Q = V   U$. In particular, when $X_1=X_2$ we have $P \sim Q$ if and only if $\Ran(P) \simeq \Ran(Q)$.
\end{Lem}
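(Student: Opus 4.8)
The plan is to exploit the canonical factorisation of a bounded idempotent through its (closed) range, which reduces the statement to elementary bookkeeping with compositions. First I would fix notation: since $P$ is a bounded idempotent, $\Ran(P)$ is a closed subspace of $X_1$, so I may write $j_P \colon \Ran(P) \to X_1$ for the inclusion and $\tilde P \colon X_1 \to \Ran(P)$ for $P$ with its range corestricted; these satisfy $P = j_P \tilde P$ and $\tilde P j_P = I_{\Ran(P)}$. I would define $j_Q$ and $\tilde Q$ analogously for $Q$.

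For the forward implication, suppose $T \colon \Ran(P) \to \Ran(Q)$ is a linear homeomorphism, so that by the bounded inverse theorem both $T$ and $S := T^{-1}$ are bounded. I would then set $V := j_Q T \tilde P \in \mathcal{B}(X_1,X_2)$ and $U := j_P S \tilde Q \in \mathcal{B}(X_2,X_1)$. Using $\tilde Q j_Q = I_{\Ran(Q)}$ together with $ST = I_{\Ran(P)}$, the composition collapses to $UV = j_P S (\tilde Q j_Q) T \tilde P = j_P (ST) \tilde P = j_P \tilde P = P$; symmetrically, $\tilde P j_P = I_{\Ran(P)}$ and $TS = I_{\Ran(Q)}$ give $VU = Q$.

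For the converse, suppose $U \in \mathcal{B}(X_2,X_1)$ and $V \in \mathcal{B}(X_1,X_2)$ satisfy $P = UV$ and $Q = VU$. The key observation is that $V$ carries $\Ran(P)$ into $\Ran(Q)$ and $U$ carries $\Ran(Q)$ into $\Ran(P)$: indeed, for $x \in \Ran(P)$ we have $Q(Vx) = (VU)(Vx) = V(UVx) = V(Px) = Vx$, and the analogous computation works for $U$ on $\Ran(Q)$. Restricting, I obtain $V_0 := V\vert_{\Ran(P)} \in \mathcal{B}(\Ran(P),\Ran(Q))$ and $U_0 := U\vert_{\Ran(Q)} \in \mathcal{B}(\Ran(Q),\Ran(P))$, and the relations $P = UV$, $Q = VU$ immediately yield $U_0 V_0 = I_{\Ran(P)}$ and $V_0 U_0 = I_{\Ran(Q)}$. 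Hence $V_0$ is a linear homeomorphism with bounded inverse $U_0$, so $\Ran(P) \simeq \Ran(Q)$.

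Finally, the \emph{in particular} clause is immediate: when $X_1 = X_2 = X$, the existence of $U,V \in \mathcal{B}(X)$ with $P = UV$ and $Q = VU$ is precisely the definition of $P \sim Q$, so specialising the biconditional just established gives $P \sim Q$ if and only if $\Ran(P) \simeq \Ran(Q)$. I do not expect a genuine obstacle here; the only point requiring care is verifying that all the maps in sight are bounded — in particular invoking the bounded inverse theorem to guarantee continuity of $S = T^{-1}$ — after which both directions reduce to collapsing compositions via the idempotent identities.
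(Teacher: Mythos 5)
Your proof is correct and complete; note that the paper itself offers no proof of this lemma, merely citing \cite{ringfinofopalgs}*{Lemma~1.4}, and your argument via the canonical factorisation $P = j_P\tilde P$, $\tilde P j_P = I_{\Ran(P)}$ is exactly the standard one used there. Both directions check out (the forward construction $U = j_P S\tilde Q$, $V = j_Q T\tilde P$ collapses correctly, and the converse correctly verifies that $V$, $U$ restrict to mutually inverse maps between the ranges), so there is nothing to fix.
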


\subsubsection{Ideals of operators on Banach spaces}

Let $X$ and $Y$ be Banach spaces. Let $T \in \mathcal{B}(X,Y)$. Then $T$ is a \textit{finite-rank operator} if $\Ran(T)$ is finite-dimensional. The symbol $\mathcal{F}(X,Y)$ stands for the set of finite-rank operators on $X$. It is well known that $\mathcal{F}$ is the smallest operator ideal in the sense of Pietsch; see for example \cite{pietsch}*{Theorem~1.2.2}. In an infinite-dimensional Banach space $X$, $\mathcal{F}(X)$ is always a proper, non-closed, two-sided ideal.

The symbol $\mathcal{A}(X,Y)$ stands for the (operator) norm-closure of $\mathcal{F}(X,Y)$. It is clear that $\mathcal{A}(X)$ is the smallest closed, proper, non-zero, two-sided ideal in $\mc B(X)$. An element of $\mathcal{A}(X,Y)$ is called an \textit{approximable operator}. The set of compact operators from $X$ to $Y$ is denoted by $\mathcal{K}(X,Y)$. It is known that $\mathcal{K}$ is a closed operator ideal such that $\mathcal{A} \subseteq \mathcal{K}$.

\subsubsection{Schauder bases in Banach spaces}\label{bases}

Let $X$ be a Banach space with Schauder basis $(b_n)_{n \in \mathbb{N}}$. Then $f_n \in X^*$ and $P_n \in \mathcal{B}(X)$ denote the corresponding $n^{\text{th}}$ coordinate functional and projection, respectively, for all $n \in \mathbb{N}$. It is standard that $(P_n)_{n \in \mathbb{N}}$ converges to $I_X$ in the strong operator topology. In particular, $(P_n)_{n \in \mathbb{N}}$ is uniformly bounded by the Banach--Steinhaus Theorem. We remark in passing that if a Banach space $X$ has a basis, then $\mathcal{A}(X) = \mathcal{K}(X)$.

Recall that if $(b_n)_{n \in \mathbb{N}}$ is an unconditional basis in $X$, then for any $A \subseteq \mathbb{N}$ 
\begin{align}\label{equividemp}
		P_A \colon X \rightarrow X; \quad x \mapsto \sum\limits_{n \in A} \langle x, f_n \rangle b_n
\end{align}
defines a bounded linear idempotent operator on $X$ and the family $(P_A)_{A\in\mathcal{P}(\mathbb N)}$ is uniformly bounded. A basis $(b_n)_{n \in \mathbb{N}}$ of $X$ is called \textit{subsymmetric} if\index{subsymmetric basis} it is an unconditional basis and the basic sequence $(b_{\sigma(n)})_{n \in \mathbb{N}}$ is equivalent to $(b_n)_{n \in \mathbb{N}}$ for every strictly monotone increasing function $\sigma \colon \mathbb{N} \rightarrow \mathbb{N}$. We note that the natural bases for $c_0$ and $\ell_p$ (where $1 \leq p < \infty$) are subsymmetric; see \cite{albiackalton}*{Section~9.2}. For $p \in [1, \infty) \backslash \lbrace 2 \rbrace$ the space $L_p[0,1]$ does not have a subsymmetric basis; see \cite{Singer1}*{Chapter~II,~Theorem~21.1~on~p.~568}. In fact, $L_1[0,1]$ does not even have an unconditional basis by \cite{albiackalton}*{Theorem~6.3.3}.

The following well known fact can be found, for example, in the monograph of Lindenstrauss and Tzafriri. We refer to the paragraph after \cite{LT}*{Definition~3.a.2}.

\begin{Prop}\label{subsymnorm}
	Let $X$ be a Banach space with a subsymmetric basis $(b_n)_{n \in \mathbb{N}}$. For any strictly monotone increasing function $\sigma \colon \mathbb{N} \rightarrow \mathbb{N}$ the map 
	\begin{align}\label{shiftop}
		S_{\sigma} \colon X \rightarrow X; \quad x \mapsto \sum\limits_{n \in \mathbb{N}} \langle x, f_n \rangle b_{\sigma(n)}
	\end{align}
	is an isomorphism onto its range.
\end{Prop}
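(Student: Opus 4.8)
The plan is to reduce the statement to the very definition of a subsymmetric basis. Recall that two basic sequences $(u_n)_{n \in \mathbb{N}}$ and $(v_n)_{n \in \mathbb{N}}$ being \emph{equivalent} means there are constants $0 < c \leq C$ such that $c \Vert \sum_n a_n u_n \Vert \leq \Vert \sum_n a_n v_n \Vert \leq C \Vert \sum_n a_n u_n \Vert$ for every finitely supported scalar sequence $(a_n)_{n \in \mathbb{N}}$. Since $(b_n)_{n \in \mathbb{N}}$ is subsymmetric and $\sigma \colon \mathbb{N} \rightarrow \mathbb{N}$ is strictly monotone increasing, the subsequence $(b_{\sigma(n)})_{n \in \mathbb{N}}$ is a basic sequence equivalent to $(b_n)_{n \in \mathbb{N}}$; I would fix the corresponding constants $c_{\sigma}, C_{\sigma} > 0$. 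Note that $S_{\sigma}(b_n) = b_{\sigma(n)}$, so on finitely supported vectors $S_{\sigma}$ is precisely the map implementing this equivalence.

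Next I would check that $S_{\sigma}$ is well defined and bounded. For $x \in X$ write $x = \sum_n \langle x, f_n \rangle b_n$ and set $x_N = P_N x = \sum_{n=1}^N \langle x, f_n \rangle b_n$. Applying the equivalence constants to the finitely supported vector $x_M - x_N$ (for $M > N$) gives $\Vert S_{\sigma} x_M - S_{\sigma} x_N \Vert \leq C_{\sigma} \Vert x_M - x_N \Vert$, so $(S_{\sigma} x_N)_{N \in \mathbb{N}}$ is Cauchy because $(x_N)_{N \in \mathbb{N}}$ converges to $x$; hence the series defining $S_{\sigma} x$ converges and $S_{\sigma} x \in X$. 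Linearity of $S_{\sigma}$ is immediate, and passing to the limit in $\Vert S_{\sigma} x_N \Vert \leq C_{\sigma} \Vert x_N \Vert$ yields $\Vert S_{\sigma} x \Vert \leq C_{\sigma} \Vert x \Vert$, so $S_{\sigma} \in \mathcal{B}(X)$.

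Finally, I would establish the lower bound. The same equivalence gives $\Vert S_{\sigma} x_N \Vert \geq c_{\sigma} \Vert x_N \Vert = c_{\sigma} \Vert P_N x \Vert$, and letting $N \to \infty$ (using $P_N x \to x$, the continuity of the norm, and $S_{\sigma} x_N \to S_{\sigma} x$) yields $\Vert S_{\sigma} x \Vert \geq c_{\sigma} \Vert x \Vert$ for every $x \in X$. Thus $c_{\sigma} \Vert x \Vert \leq \Vert S_{\sigma} x \Vert \leq C_{\sigma} \Vert x \Vert$, which shows that $S_{\sigma}$ is bounded and bounded below; since $X$ is complete, $\Ran(S_{\sigma})$ is closed and coincides with $\overline{\spanning} \lbrace b_{\sigma(n)} : n \in \mathbb{N} \rbrace$, so $S_{\sigma}$ is an isomorphism onto its range.

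I expect no serious obstacle here: the content is entirely the definition of equivalence of bases supplied by subsymmetry, and the only genuine step is the routine limiting argument that upgrades the finitely supported estimates to all of $X$, which is powered by the uniform constants $c_{\sigma}, C_{\sigma}$ together with the convergence $P_N x \to x$.
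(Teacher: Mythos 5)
Your proof is correct, but it is worth noting that the paper itself offers no proof of Proposition~\ref{subsymnorm} at all: the result is quoted as a well-known fact with a pointer to the paragraph following Definition~3.a.2 of Lindenstrauss and Tzafriri \cite{LT}. What you have written is precisely the standard argument underlying that citation: subsymmetry supplies the equivalence of $(b_n)_{n \in \mathbb{N}}$ with $(b_{\sigma(n)})_{n \in \mathbb{N}}$, the resulting two-sided estimate with constants $c_{\sigma}, C_{\sigma}$ holds on finitely supported vectors, and density of those vectors together with completeness of $X$ upgrades the estimate to all of $X$, so that $S_{\sigma}$ is well defined, bounded, and bounded below, hence an isomorphism onto its closed range. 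The limiting steps (Cauchyness of the partial sums $S_{\sigma} P_N x$, passage to the limit in both inequalities, and the identification of the range with $\overline{\spanning}\lbrace b_{\sigma(n)} \colon n \in \mathbb{N} \rbrace$) are all carried out correctly. The one point worth flagging is your opening sentence: many texts define equivalence of basic sequences qualitatively, namely that $\sum_n a_n u_n$ converges if and only if $\sum_n a_n v_n$ converges, and the existence of the uniform constants $c_{\sigma}, C_{\sigma}$ is then a consequence of the closed graph theorem applied to the formal identity map between the closed spans. If you adopt the quantitative statement as the definition, your argument is complete as written; if you start from the qualitative definition, you need one invocation of the closed graph theorem before your first paragraph. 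Either way this is a presentational choice, not a gap.
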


We recall that a Schauder basis $(b_n)_{n \in \mathbb{N}}$ for a Banach space $X$ is \textit{shrinking} if the sequence of coordinate functionals $(f_n)_{n \in \mathbb{N}}$ associated with $(b_n)_{n \in \mathbb{N}}$ is a Schauder basis for $X^*$. Any Schauder basis in a reflexive Banach space is shrinking (see \cite{Singer1}*{Chapter~II, Example~4.3 on p.~278}). Clearly $\ell_1$ and $L_1[0,1]$ cannot have shrinking bases since their dual spaces are non-separable.

\subsection{Dual Banach algebras and approximate identities}

For our definition of a dual Banach algebra we follow \cite{dawsdual}*{Sections~1~and~2} and \cite{Runde1}*{Chapter~5}. Let $\mc B$ be a Banach algebra and let $X$ be a Banach space such that $\mc B$ and $X^*$ are isometrically isomorphic as Banach spaces. We say that $\mc B$ is a \textit{dual Banach algebra} with \textit{predual} $X$ if multiplication on $\mc B$ (henceforth identified with $X^*$ as a Banach space) is separately $\sigma(X^*,X)$ - to - $\sigma(X^*,X)$ continuous. We remark in passing that this latter condition is equivalent to saying that the image $\Ran(\kappa_{X})$ of the canonical embedding $\kappa_{X} \colon X \to X^{**}$ is a sub-$\mc B$-bimodule of $\mc B^*$. See \cite{dawsdual}*{Lemma~2.2}.

If $X$ is a Banach space, then the projective tensor product of $X$ and $X^*$ is denoted by $X \widehat{\otimes}_{\pi} X^*$. For background information on the projective tensor products of Banach spaces we refer the reader to \cite{defflor} and \cite{Ryan}.

The following result is taken from \cite{Runde1}*{Example~5.1.4}:
\begin{Lem}\label{bxisdualbanalg}
	Let $X$ be a reflexive Banach space. Then there is an isometric isomorphism between $\mathcal{B}(X)$ and $(X \widehat{\otimes}_{\pi} X^*)^*$ given by
	\begin{align*}
		\langle x \otimes f, A \rangle = \langle Ax, f \rangle
	\end{align*}
	for any $x \in X, f \in X^*$ and $A \in \mathcal{B}(X)$. Moreover, $\mathcal{B}(X)$ is a dual Banach algebra with predual $X \widehat{\otimes}_{\pi} X^*$.
\end{Lem}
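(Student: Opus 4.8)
The plan is to deduce the statement from the standard identification of the dual space of a projective tensor product. Recall that for any Banach spaces $E$ and $F$ there is an isometric isomorphism
\[
(E \widehat{\otimes}_{\pi} F)^* \cong \mathcal{B}(E, F^*),
\]
under which an operator $T \in \mathcal{B}(E, F^*)$ corresponds to the functional determined on elementary tensors by $\langle e \otimes g, T \rangle = \langle g, Te \rangle$ (here $e \in E$, $g \in F$, and we pair $F$ with $F^*$); see \cite{Ryan} or \cite{defflor}. Applying this with $E = X$ and $F = X^*$ gives an isometric isomorphism $(X \widehat{\otimes}_{\pi} X^*)^* \cong \mathcal{B}(X, X^{**})$, under which $T \in \mathcal B(X,X^{**})$ corresponds to the functional satisfying $\langle x \otimes f, T \rangle = \langle f, Tx \rangle$ for $x \in X$ and $f \in X^*$.

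Next I would invoke reflexivity. Since $X$ is reflexive, the canonical embedding $\kappa_X \colon X \to X^{**}$ is an isometric isomorphism onto $X^{**}$, so the map $A \mapsto \kappa_X \circ A$ is an isometric isomorphism from $\mathcal{B}(X)$ onto $\mathcal{B}(X, X^{**})$. Composing this with the identification above yields an isometric isomorphism $\Theta \colon \mathcal{B}(X) \to (X \widehat{\otimes}_{\pi} X^*)^*$, and it remains only to check that $\Theta$ is given by the stated formula. Indeed, for $A \in \mathcal{B}(X)$ the operator $\kappa_X \circ A \in \mathcal{B}(X, X^{**})$ pairs with $x \otimes f$ as $\langle f, \kappa_X(Ax) \rangle = \langle Ax, f \rangle$, exactly as claimed. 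This establishes the isometric isomorphism of Banach spaces.

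For the ``moreover'' part I would verify that multiplication on $\mathcal{B}(X)$ is separately $\sigma(\mathcal{B}(X), X \widehat{\otimes}_{\pi} X^*)$-continuous, which amounts to exhibiting, for each fixed $B \in \mathcal{B}(X)$, bounded preadjoints of the left and right multiplication maps. By the metric mapping property of the projective tensor product, $B \otimes I_{X^*}$ and $I_X \otimes B^*$ are bounded operators on $X \widehat{\otimes}_{\pi} X^*$. Right multiplication $A \mapsto AB$ is the Banach-space adjoint of $B \otimes I_{X^*}$, since on elementary tensors $\langle x \otimes f, AB \rangle = \langle A(Bx), f \rangle = \langle (Bx) \otimes f, A \rangle = \langle (B \otimes I_{X^*})(x \otimes f), A \rangle$; likewise, left multiplication $A \mapsto BA$ is the adjoint of $I_X \otimes B^*$, because $\langle x \otimes f, BA \rangle = \langle Ax, B^*f \rangle = \langle x \otimes (B^*f), A \rangle$. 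Being adjoints of bounded operators on the predual, both multiplication maps are weak${}^*$-continuous, and hence $\mathcal{B}(X)$ is a dual Banach algebra with predual $X \widehat{\otimes}_{\pi} X^*$.

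The argument is essentially a matter of unwinding the definitions, so I do not anticipate a serious obstacle; the only genuine input is the standard duality $(E \widehat{\otimes}_{\pi} F)^* \cong \mathcal{B}(E, F^*)$. The one point requiring care is the bookkeeping with the canonical embedding $\kappa_X$ and the use of reflexivity to pass from $\mathcal{B}(X, X^{**})$ back to $\mathcal{B}(X)$, so that the pairing comes out as $\langle x \otimes f, A\rangle = \langle Ax, f\rangle$ rather than its transpose.
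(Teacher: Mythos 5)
Your proof is correct. The paper itself gives no proof of this lemma --- it simply cites Runde's book (Example~5.1.4) --- and your argument is precisely the standard one behind that citation: trace duality $(E \widehat{\otimes}_{\pi} F)^* \cong \mathcal{B}(E,F^*)$ combined with reflexivity to identify $\mathcal{B}(X,X^{**})$ with $\mathcal{B}(X)$, and then exhibiting $B \otimes I_{X^*}$ and $I_X \otimes B^*$ as preadjoints of the right and left multiplication maps. The only step you use implicitly is that two bounded functionals on $X \widehat{\otimes}_{\pi} X^*$ agreeing on elementary tensors agree everywhere (elementary tensors span a dense subspace), which is routine; with that remark your verification of the adjoint identities, and hence of separate weak$^*$-continuity of multiplication, is complete.
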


Let $\mc A$ be a Banach algebra. A net $(e_{\gamma})_{\gamma \in \Gamma}$ in $\mc A$ is a \textit{bounded left (respectively, right) approximate identity} if $\sup_{\gamma} \Vert e_{\gamma} \Vert < \infty$ and $\lim_{\gamma} e_{\gamma} a = a$  (respectively, $ \lim_{\gamma} ae_{\gamma} = a $) for every $a \in \mc A$. A net $(e_{\gamma})_{\gamma \in \Gamma}$ is a \textit{bounded approximate identity (b.a.i.)} if it is a bounded left and right approximate identity.	

The following is an immediate consequence of \cite{Dales}*{Theorem~2.9.37}.

\begin{Cor}\label{aiai2}
	Let $X$ be a Banach space with a Schauder basis. Then the sequence of coordinate projections $(P_n)_{n \in \mathbb{N}}$ is a bounded left approximate identity for $\mathcal{K}(X)$. If $X$ has a shrinking basis, then  $(P_n)_{n \in \mathbb{N}}$ is a bounded approximate identity for $\mathcal{K}(X)$.
\end{Cor}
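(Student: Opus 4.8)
The plan is to exploit the two properties of the coordinate projections recorded in Subsection~\ref{bases}: the Banach--Steinhaus Theorem yields $M := \sup_n \Vert P_n \Vert < \infty$, while $(P_n)_{n \in \mathbb{N}}$ converges to $I_X$ in the strong operator topology. Since $(P_n)_{n \in \mathbb{N}}$ is uniformly bounded, proving that it is a bounded left approximate identity for $\mathcal{K}(X)$ amounts to showing that $\Vert P_n T - T \Vert \to 0$ for every $T \in \mathcal{K}(X)$. The crux is an elementary principle which I would isolate first: \emph{a uniformly bounded sequence of operators converging to the identity in the strong operator topology converges uniformly on every norm-compact subset of $X$.}

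To carry out the left-hand case I would fix $T \in \mathcal{K}(X)$ and set $K := \overline{T(B_X)}$, which is norm-compact because $T$ is compact. Given $\epsilon > 0$, cover $K$ by finitely many balls of radius $\epsilon$ centred at points $y_1, \dots, y_m \in K$, and choose $N$ so large that $\Vert P_n y_j - y_j \Vert < \epsilon$ for all $j$ and all $n \geq N$. The triangle inequality, together with the uniform bound $M$, then gives $\Vert P_n y - y \Vert \leq (M+2)\epsilon$ for every $y \in K$ and every $n \geq N$. Consequently $\Vert P_n T - T \Vert = \sup_{\Vert x \Vert \leq 1} \Vert (P_n - I_X) T x \Vert \leq \sup_{y \in K} \Vert (P_n - I_X) y \Vert \to 0$, establishing the first assertion; note that no shrinking hypothesis is used here.

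For the two-sided statement under the shrinking assumption I would pass to adjoints. Since taking adjoints is isometric, $\Vert T P_n - T \Vert = \Vert P_n^* T^* - T^* \Vert$, and $T^*$ is compact by Schauder's theorem. A direct computation gives $P_n^* g = \sum_{k=1}^{n} \langle b_k, g \rangle f_k$ for $g \in X^*$, so $P_n^*$ is precisely the $n^{\text{th}}$ coordinate projection associated with the sequence $(f_n)_{n \in \mathbb{N}}$. When the basis is shrinking this sequence is a Schauder basis for $X^*$, whence $(P_n^*)_{n \in \mathbb{N}}$ is uniformly bounded and converges to $I_{X^*}$ in the strong operator topology. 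Applying the principle above to the compact operator $T^*$ yields $\Vert P_n^* T^* - T^* \Vert \to 0$, that is, $\Vert T P_n - T \Vert \to 0$, which finishes the proof.

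The main obstacle is the asymmetry between the two sides. Left multiplication converges for an arbitrary basis because $P_n \to I_X$ strongly, but nothing about the strong convergence of $(P_n)_{n \in \mathbb{N}}$ controls $T P_n$ directly; one is forced through the adjoints, where the decisive strong convergence $P_n^* \to I_{X^*}$ is available exactly when $(f_n)_{n \in \mathbb{N}}$ is a basis of $X^*$, i.e.\ when the basis is shrinking. For a non-shrinking basis such as the unit vector basis of $\ell_1$ one may pick $g \in X^* \setminus \overline{\spanning\{ f_n : n \in \mathbb{N}\}}$ and a rank-one operator $T = y \otimes g$ for which $P_n^* g$ fails to converge to $g$ in norm, so that $T P_n \not\to T$; thus the shrinking hypothesis genuinely cannot be dropped. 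The only remaining care is to identify $P_n^*$ correctly as the dual coordinate projections and to invoke the compactness of $T^*$.
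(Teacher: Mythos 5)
Your proof is correct, but it is not the paper's route: the paper gives no argument at all here, recording the corollary as an immediate consequence of \cite{Dales}*{Theorem~2.9.37}. What you have written out is essentially the standard argument that underlies such a citation, and every step checks out. The covering argument correctly establishes the key principle that a uniformly bounded sequence converging strongly to $I_X$ converges uniformly on norm-compact sets, and applying it to $K = \overline{T(B_X)}$ disposes of the left-hand convergence with no shrinking hypothesis. For the right-hand side, the identification $P_n^* g = \sum_{k=1}^{n} \langle b_k, g \rangle f_k$ is verified correctly, so under the shrinking hypothesis $(P_n^*)$ is precisely the sequence of coordinate projections of the basis $(f_n)$ of $X^*$; together with Schauder's theorem (compactness of $T^*$) and the isometry of the adjoint map, this reduces the two-sided statement to the already-proved one-sided statement applied in $X^*$. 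Your closing observation that the hypothesis is genuinely necessary --- taking $T = y \otimes g$ with $g \in X^* \setminus \overline{\spanning}\{f_n : n \in \mathbb{N}\}$, so that $P_n^* g$ cannot converge in norm to $g$ --- is also correct and is a point the paper never makes. What the citation buys the authors is brevity; what your argument buys is self-containment and a transparent display of exactly where shrinkingness enters, namely only through the strong convergence $P_n^* \to I_{X^*}$, which is available if and only if $(f_n)$ spans a dense subspace of $X^*$.
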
			

\section{Some auxiliary results}

Let $X$ be a Banach space. Let $(f_i)_{i \in I}$ be a family of elements of the topological space $(X^*, \sigma(X^*,X))$. Let $\mathcal{U}$ be an ultrafilter on $I$ such that the ultralimit of $(f_i)_{i \in I}$ along $\mathcal{U}$ with respect to the topology $\sigma(X^*,X)$ exists in $X^*$. This limit will be denoted by $\wslim_{i \rightarrow \mathcal{U}} f_i$.

\begin{Lem}\label{idemplemmaster}
	Let $\mc B$ be a dual Banach algebra. Let $(q_{\gamma})_{\gamma \in \Gamma}$ be a bounded net in $\mc B$ such that $\lim_{\omega}q_{\omega} q_{\gamma} = q_{\gamma}$ in norm for any $\gamma \in \Gamma$. 
	Then $p=\wslim_{\gamma \rightarrow \mathcal{U}} q_{\gamma} \in \mc B$ exists and defines an idempotent whenever $\mathcal{U}$ is an ultrafilter on $\Gamma$ which extends the order filter.
\end{Lem}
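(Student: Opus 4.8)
The plan is to produce $p$ as a weak${}^*$ ultralimit by a compactness argument, and then to verify $p^2=p$ by exploiting that multiplication on a dual Banach algebra is \emph{separately} weak${}^*$-continuous, peeling off one factor at a time. For existence, write $\mc B = X^*$ for a predual $X$, so that the topology governing $\wslim$ is $\sigma(X^*,X)$. Since $(q_\gamma)_{\gamma \in \Gamma}$ is bounded, say $\|q_\gamma\| \le M$ for all $\gamma$, the net lies in the closed ball $M B_{X^*}$, which is $\sigma(X^*,X)$-compact by the Banach--Alaoglu Theorem and Hausdorff. As ultralimits of families in a compact Hausdorff space exist and are unique, $p = \wslim_{\gamma \to \mathcal{U}} q_\gamma$ exists in $M B_{X^*} \subseteq \mc B$.

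Next I would establish the ``absorbing'' identity $p\,q_\gamma = q_\gamma$ for every fixed $\gamma \in \Gamma$. Fixing $\gamma$, consider the right multiplication map $R_{q_\gamma} \colon b \mapsto b\,q_\gamma$, which is $\sigma(X^*,X)$-to-$\sigma(X^*,X)$ continuous by the definition of a dual Banach algebra. Since continuous maps preserve ultralimits, applying $R_{q_\gamma}$ to $p = \wslim_{\omega \to \mathcal{U}} q_\omega$ gives $p\,q_\gamma = \wslim_{\omega \to \mathcal{U}} (q_\omega q_\gamma)$. By hypothesis the net $(q_\omega q_\gamma)_{\omega \in \Gamma}$ converges to $q_\gamma$ in norm, hence also in $\sigma(X^*,X)$; and since $\mathcal{U}$ extends the order filter $\mathcal{F}_{\text{ord}}$, its ultralimit along $\mathcal{U}$ agrees with the net limit. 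Therefore $p\,q_\gamma = q_\gamma$.

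To conclude, note that left multiplication $L_p \colon b \mapsto p\,b$ is likewise $\sigma(X^*,X)$-continuous, so applying it to $p = \wslim_{\gamma \to \mathcal{U}} q_\gamma$ and invoking the absorbing identity yields $p^2 = \wslim_{\gamma \to \mathcal{U}} (p\,q_\gamma) = \wslim_{\gamma \to \mathcal{U}} q_\gamma = p$, as desired.

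The delicate point I expect is that multiplication is only separately, not jointly, weak${}^*$-continuous, so $p^2$ cannot be collapsed into a single ultralimit of products such as $q_\gamma^2$; the two factors must be handled one at a time, with the other frozen as a fixed element of $\mc B$. A second subtlety worth watching is the change of topology in the middle step: the hypothesis supplies \emph{norm} convergence of $(q_\omega q_\gamma)_\omega$, and one must invoke both that norm convergence implies $\sigma(X^*,X)$ convergence and that $\mathcal{U} \supseteq \mathcal{F}_{\text{ord}}$ forces the ultralimit to coincide with the net limit.
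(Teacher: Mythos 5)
Your proposal is correct and follows essentially the same route as the paper's proof: existence of $p$ via Banach--Alaoglu, the absorbing identity $pq_{\gamma}=q_{\gamma}$ obtained by combining separate weak${}^*$-continuity of multiplication with the fact that an ultralimit along an ultrafilter extending the order filter agrees with the norm (hence weak${}^*$) limit, and finally $p^2=p$ by one more application of separate weak${}^*$-continuity. The only difference is one of exposition: you make explicit the topology-change step and the one-factor-at-a-time use of separate continuity, which the paper leaves implicit.
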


\begin{proof}
	Let $\mathcal{U}$ be an ultrafilter on $\Gamma$ which extends the order filter. By the Banach--Alaoglu Theorem $p=\wslim_{\gamma \rightarrow \mathcal{U}} q_{\gamma} \in \mc B$ is well-defined. We show that $p \in \mc B$ is idempotent. As multiplication on $\mc B$ is separately $\text{weak}^*$ continuous, we have
	\begin{align*}
	pq_{\gamma}&= \left(\wslim\limits_{\omega \rightarrow \mathcal{U}} q_{\omega} \right)q_{\gamma} = \wslim\limits_{\omega \rightarrow \mathcal{U}} q_{\omega} q_{\gamma} = q_{\gamma} \quad (\text{for all } \gamma \in \Gamma),
	\end{align*}
	because $\lim_{\omega \rightarrow \mathcal{U}} q_{\omega} q_{\gamma} = q_{\gamma}$ in norm. Consequently,
	\begin{align*}
	p^2 &= p \left(\wslim\limits_{\gamma \rightarrow \mathcal{U}} q_{\gamma} \right) = \wslim\limits_{\gamma \rightarrow \mathcal{U}} p q_{\gamma} = \wslim\limits_{\gamma \rightarrow \mathcal{U}} q_{\gamma} = p.
	\end{align*}
	This shows that $p$ is an idempotent as claimed.
\end{proof}

The following lemma has many ``folklore'' variations (see \textit{e.g.} \cite{berksonporta}*{Lemma~2.23}). Rather than hunt for a reference which states Lemma~\ref{projsot} exactly in the form suitable for our purpose, we shall prove the result here.

\begin{Lem}\label{projsot}
Let $X$ be a reflexive Banach space. Let $(Q_n)_{n \in \mathbb{N}}$ be a bounded, monotone increasing sequence of idempotents in $\mc B (X)$. Then there exists an idempotent $Q \in \mc B (X)$ such that $(Q_n)_{n \in \mathbb{N}}$ converges to $Q$ in the strong operator topology.
\end{Lem}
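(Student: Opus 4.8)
The plan is to construct $Q$ pointwise as the strong limit of $(Q_n x)_n$ for each $x \in X$, using reflexivity only to extract a weak subsequential limit and then bootstrapping it to genuine strong convergence of the whole sequence. Write $M = \sup_n \Vert Q_n \Vert < \infty$. Since the sequence is monotone increasing, $Q_n \leq Q_m$ for $n \leq m$, which unravels to $Q_m Q_n = Q_n = Q_n Q_m$; in particular the ranges are nested, $\Ran(Q_n) \subseteq \Ran(Q_m)$. Set $Y = \overline{\bigcup_n \Ran(Q_n)}$. The first thing I would record is that $(Q_m)_m$ converges strongly to the identity on $Y$: if $z \in \Ran(Q_{n_0})$ then $Q_m z = Q_m Q_{n_0} z = Q_{n_0} z = z$ for every $m \geq n_0$, so $Q_m z \to z$; since $\bigcup_n \Ran(Q_n)$ is dense in $Y$ and $\sup_m \Vert Q_m \Vert = M$, a routine $\varepsilon/3$-estimate upgrades this to $Q_m y \to y$ in norm for all $y \in Y$.

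Next, fix $x \in X$. The sequence $(Q_n x)_n$ is bounded by $M \Vert x \Vert$, so by reflexivity and the Eberlein--\v{S}mulian theorem it has a subsequence $(Q_{n_k} x)_k$ converging weakly to some $y \in X$. I claim that $y$ already controls the entire sequence. Indeed, fix $j$; for all $k$ with $n_k \geq j$ we have $Q_j Q_{n_k} x = Q_j x$, and since $Q_j$ is weak-to-weak continuous, passing to the weak limit in $k$ yields $Q_j y = Q_j x$ for every $j \in \mathbb{N}$. Moreover each $Q_{n_k} x$ lies in the closed, hence weakly closed, convex set $Y$, so $y \in Y$. Combining these two facts with the first paragraph, $Q_j x = Q_j y \to y$ in norm as $j \to \infty$. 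Thus the full sequence $(Q_n x)_n$ converges strongly, and I define $Qx := \lim_n Q_n x$.

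Finally I would verify that $Q$ has the required properties. Linearity is immediate, and $\Vert Qx \Vert = \lim_n \Vert Q_n x \Vert \leq M \Vert x \Vert$ shows $Q \in \mc B(X)$ with $\Vert Q \Vert \leq M$. For idempotency, fix $x$ and $j$; since $Q_m x \to Qx$ in norm and $Q_j$ is continuous, and since $Q_j Q_m = Q_j$ for $m \geq j$, we get $Q_j Qx = \lim_{m \to \infty} Q_j Q_m x = Q_j x$, whence $Q^2 x = \lim_j Q_j(Qx) = \lim_j Q_j x = Qx$. Therefore $Q$ is an idempotent, and $Q_n \to Q$ in the strong operator topology by the very definition of $Q$.

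I expect the crux of the argument to be the bootstrapping step in the second paragraph: reflexivity only supplies a weak cluster point $y$ along a subsequence, and the non-obvious point is that the algebraic relations $Q_j y = Q_j x$ together with the membership $y \in Y$ force $y$ to be the strong limit of the \emph{entire} sequence, rather than merely a weak cluster point of a subsequence. (One could alternatively invoke Lemma~\ref{idemplemmaster}, via Lemma~\ref{bxisdualbanalg}, to obtain an idempotent as a weak$^*$ limit, but that would still leave the upgrade to strong sequential convergence to be done by hand.) Everything else is routine bookkeeping.
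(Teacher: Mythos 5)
Your proof is correct, but it takes a genuinely different route from the paper's. The paper stays inside the dual Banach algebra framework it has already built: it fixes a free ultrafilter $\mathcal{U}$ on $\mathbb{N}$, applies Lemma~\ref{idemplemmaster} (via the identification $\mc B(X) \simeq (X \widehat{\otimes}_{\pi} X^*)^*$ of Lemma~\ref{bxisdualbanalg}) to obtain an idempotent $Q = \wslim_{n \rightarrow \mathcal{U}} Q_n$, checks that this weak$^*$ ultralimit is also a weak-operator ultralimit, proves the relations $Q_n Q = Q_n = Q Q_n$, and then upgrades to strong convergence via Mazur's theorem: $Qx$ lies in $\overline{\conv}\lbrace Q_m x \colon m \in \mathbb{N}\rbrace$, and any finite convex combination $\sum_{j \in \Gamma} \lambda_j Q_j x$ close to $Qx$ is fixed by $Q_n$ for $n \geq \max(\Gamma)$, which yields the quantitative estimate $\Vert Qx - Q_n x \Vert < \epsilon$. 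You dispense with ultrafilters, the predual, and Mazur's theorem altogether: your compactness input is Eberlein--\v{S}mulian (a weak subsequential limit $y$ of the bounded sequence $(Q_n x)$), and your weak-to-strong bootstrap is the strong convergence of $(Q_m)$ to the identity on $Y = \overline{\bigcup_n \Ran(Q_n)}$, combined with the identities $Q_j y = Q_j x$ and the membership $y \in Y$. Both arguments pivot on the same algebraic relations $Q_j Q_m = Q_j = Q_m Q_j$ for $j \leq m$; only the compactness device and the upgrade mechanism differ. The paper's route buys economy: Lemmas~\ref{idemplemmaster} and~\ref{bxisdualbanalg} are needed anyway for Lemma~\ref{techlem1} and Theorem~\ref{main}, so Lemma~\ref{projsot} comes nearly for free. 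Your route buys a self-contained, elementary argument that moreover identifies the limit concretely (the idempotent acting as the identity on $Y$, obtained as a pointwise norm limit) and establishes convergence of the whole sequence directly rather than extracting it from an ultralimit.
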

	
\begin{proof}
  Let $\mathcal{U}$ be a free ultrafilter on $\mathbb{N}$. We show first that there exists an idempotent $Q \in \mathcal{B}(X)$ such that $(Q_n)$ converges to $Q$ along $\mathcal{U}$ in the weak operator topology. Let $X \widehat{\otimes}_{\pi} X^*$ be the predual of $\mathcal{B}(X)$ as in Lemma~\ref{bxisdualbanalg}. According to Lemma~\ref{idemplemmaster},  $Q= \wslim_{n \rightarrow \mathcal{U}} Q_n$ is a well-defined idempotent operator in $\mathcal{B}(X)$. It remains to show that $\lim_{n \rightarrow \mathcal{U}} \langle Q_n x, f \rangle = \langle Qx, f \rangle$ for any $x \in X$ and $f \in X^*$. This is a simple calculation:
 \begin{align*}
 \langle Qx,f \rangle &= \langle x \otimes f, Q \rangle \notag = \lim\limits_{n \rightarrow \mathcal{U}} \langle x \otimes f, Q_n \rangle = \lim\limits_{n \rightarrow \mathcal{U}} \langle Q_n x, f \rangle.
 \end{align*}
We show that $(Q_n)$ converges to $Q$ in the strong operator topology. Firstly let us observe that $Q_n Q = Q_n$ for any $n \in \mathbb{N}$. Indeed, for any $z \in X$ and $f \in X^*$ we have 
\begin{align*}
\langle Q_n Qz,f \rangle & = \langle Qz, Q_n^* f \rangle = \lim\limits_{i \rightarrow \mathcal{U}} \langle Q_i z, Q_n^*f \rangle = \lim\limits_{i \rightarrow \mathcal{U}} \langle Q_n Q_i z, f \rangle = \langle Q_n z, f \rangle, 
\end{align*}
thus proving $Q_n Q = Q_n$. A similar argument shows $QQ_n = Q_n$ for all $n \in \mathbb{N}$. Let us now fix $x \in X$. Clearly $Q_n x \in \conv \lbrace Q_m x \colon m \in \mathbb{N} \rbrace$ for any $n \in \mathbb{N}$. Therefore $Qx= \wlim_{n \rightarrow \mathcal{U}} Q_n x$ with Mazur's Theorem (see \emph{e.g.} \cite{Megginson}*{Theorem~2.5.16}) implies that $Qx \in \overline{\conv} \lbrace Q_m x \colon m \in \mathbb{N} \rbrace$, where the closure is taken with respect to the norm topology of $X$. Let us fix $\epsilon >0$. There exist a finite set $\Gamma \subseteq \mathbb{N}$ and $(\lambda_j)_{j \in \Gamma}$ in $[0,1]$ such that $\textstyle{\| Qx - \sum_{j \in \Gamma} \lambda_j Q_j x \|} < \epsilon/(K+1)$. Let $N= \max(\Gamma)$, then $\textstyle{ Q_n ( \sum_{j \in \Gamma} \lambda_j Q_j ) = \sum_{j \in \Gamma} \lambda_j Q_j}$ for any $n \geq N$. Consequently for each $n \geq N$:
\begin{align*}
\Vert Qx - Q_n x \Vert & \leq \Big\| Qx - \sum\limits_{j \in \Gamma} \lambda_j Q_j x \Big\| + \Big\| Q_n \Big( \sum\limits_{j \in \Gamma} \lambda_j Q_j x - Qx \Big) \Big\| \notag \\
& < \dfrac{\epsilon}{K+1} + K \dfrac{\epsilon}{K+1} = \epsilon.
\end{align*}
This shows that $(Q_n x)$ converges to $Qx$ in $X$ as required.
\end{proof}	

\begin{Lem}\label{techlem1}
	Let $\mc A$ be a Banach algebra. Let $\mc J \trianglelefteq \mc A$ be a closed, two-sided ideal with a b.a.i. $(e_{\gamma})_{\gamma \in \Gamma}$. Let $\mc B$ be a unital, dual Banach algebra. Suppose $\psi \colon \mc A \rightarrow \mc B$ is a continuous algebra homomorphism. If $\mathcal{U}$ is an ultrafilter on $\Gamma$ which extends the order filter, then:
	\begin{enumerate}[label=\textup{(\arabic*)}]
		\item $p= \wslim_{\gamma \rightarrow \mathcal{U}} \psi(e_{\gamma}) \in \mc B$ is an idempotent;
		\item $p \psi(c) = \psi(c) = \psi(c) p$ for all $c \in \mc J$;
		\item $p \psi(a) = p \psi(a) p = \psi(a) p$ for all $a \in \mc A$;
		\item the map
		\begin{align}
	\theta \colon \mc A &\rightarrow \mc B; \quad a \mapsto (1_{\mc B}-p) \psi(a) (1_{\mc B}-p)
	\end{align}
	is a continuous algebra homomorphism with $\theta |_{\mc J} = 0$.
	\end{enumerate}
\end{Lem}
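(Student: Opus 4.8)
The plan is to establish the four assertions in the order stated, since each builds on its predecessors. For (1) I would simply invoke Lemma~\ref{idemplemmaster} with the bounded net $q_\gamma = \psi(e_\gamma)$. Boundedness of $(q_\gamma)_{\gamma \in \Gamma}$ follows from that of the b.a.i.\ together with the continuity of $\psi$. The hypothesis $\lim_\omega q_\omega q_\gamma = q_\gamma$ in norm is verified by writing $q_\omega q_\gamma = \psi(e_\omega e_\gamma)$ and using that $(e_\gamma)_{\gamma \in \Gamma}$ is a \emph{left} approximate identity for $\mc J$, so that $e_\omega e_\gamma \to e_\gamma$ in norm along the net while $\psi$ is continuous. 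Lemma~\ref{idemplemmaster} then yields that $p = \wslim_{\gamma \to \mathcal{U}} \psi(e_\gamma)$ is a well-defined idempotent.

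For (2), fix $c \in \mc J$ and use the separate weak* continuity of multiplication in the dual Banach algebra $\mc B$ to pull the limit past the product, obtaining
\[
p\psi(c) = \Big(\wslim_{\gamma \to \mathcal{U}}\psi(e_\gamma)\Big)\psi(c) = \wslim_{\gamma \to \mathcal{U}}\psi(e_\gamma c).
\]
Since $(e_\gamma)_{\gamma \in \Gamma}$ is a left approximate identity and $c \in \mc J$, we have $e_\gamma c \to c$ in norm, hence $\psi(e_\gamma c) \to \psi(c)$ in norm and \emph{a fortiori} in the weak* topology; because $\mathcal{U}$ extends the order filter, the ultralimit agrees with the net limit, giving $p\psi(c) = \psi(c)$. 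The identity $\psi(c)p = \psi(c)$ is proved symmetrically, using that $(e_\gamma)_{\gamma \in \Gamma}$ is also a \emph{right} approximate identity.

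The crux is (3), from which (4) follows almost formally. For $a \in \mc A$ the ideal property of $\mc J$ gives $e_\gamma a, a e_\gamma \in \mc J$ for every $\gamma$. Writing $p\psi(a) = \wslim_{\gamma\to\mathcal{U}}\psi(e_\gamma a)$ and applying the relation $\psi(e_\gamma a)p = \psi(e_\gamma a)$ from (2), together with weak* continuity of right multiplication by $p$, yields $p\psi(a)p = p\psi(a)$. Dually, $\psi(a)p = \wslim_{\gamma\to\mathcal{U}}\psi(ae_\gamma)$ and $p\psi(ae_\gamma) = \psi(ae_\gamma)$ give $p\psi(a)p = \psi(a)p$. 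Combining these, $p\psi(a) = p\psi(a)p = \psi(a)p$; in particular $p$ commutes with every element of $\Ran(\psi)$.

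Finally, for (4): continuity of $\theta$ is immediate, as it is $\psi$ composed with the bounded left and right multiplications by $1_{\mc B}-p$. Since (2) gives $(1_{\mc B}-p)\psi(c) = 0$ for $c \in \mc J$, we obtain $\theta|_{\mc J} = 0$. For multiplicativity, (3) shows that $1_{\mc B}-p$ is an idempotent commuting with $\Ran(\psi)$, whence $\theta(a) = (1_{\mc B}-p)\psi(a) = \psi(a)(1_{\mc B}-p)$, and therefore
\[
\theta(a)\theta(b) = (1_{\mc B}-p)\psi(a)(1_{\mc B}-p)\psi(b) = (1_{\mc B}-p)\psi(ab) = \theta(ab),
\]
using the idempotence of $1_{\mc B}-p$ and its commutation with $\Ran(\psi)$. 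Linearity being clear, $\theta$ is a continuous algebra homomorphism. I expect the only genuinely delicate points to be the repeated interchange of weak* ultralimits with multiplication (licensed by separate weak* continuity in $\mc B$) and the identification of the ultralimit along $\mathcal{U}$ with the norm net limit (licensed by $\mathcal{F}_{\text{ord}} \subseteq \mathcal{U}$); everything else is routine bookkeeping.
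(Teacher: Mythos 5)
Your proof is correct and follows essentially the same route as the paper's: Lemma~\ref{idemplemmaster} applied to the bounded net $(\psi(e_\gamma))_{\gamma\in\Gamma}$ for (1), the interchange of weak$^*$ ultralimits with multiplication (plus the identification of ultralimits with norm net limits via $\mathcal{F}_{\text{ord}}\subseteq\mathcal{U}$) for (2) and (3), and the reduction $\theta(a)=(1_{\mc B}-p)\psi(a)$ via commutation for (4). The only differences are cosmetic, e.g.\ in (3) you apply weak$^*$ continuity of multiplication by $p$ where the paper multiplies by $\psi(a)p$, and you spell out the multiplicativity computation the paper leaves implicit.
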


\begin{proof}
	(1) Since we have $\lim_{\gamma} \psi(e_{\gamma}) \psi(e_{\omega}) = \lim_{\gamma} \psi(e_{\gamma} e_{\omega}) = \psi(e_{\omega})$ and similarly,  $\lim_{\gamma} \psi(e_{\omega}) \psi(e_{\gamma}) = \lim_{\gamma} \psi(e_{\omega} e_{\gamma}) = \psi(e_{\omega})$ for any $\omega \in \Gamma$,  the statement follows from Lemma~\ref{idemplemmaster}.
	
Before we proceed we observe that for any $a \in \mc A$
\begin{align}\label{unifcalc1}
	p \psi(a) &= \left( \wslim\limits_{\gamma \rightarrow \mathcal{U}} \psi(e_{\gamma}) \right) \psi(a) = \wslim\limits_{\gamma \rightarrow \mathcal{U}} \psi(e_{\gamma}) \psi(a) = \wslim\limits_{\gamma \rightarrow \mathcal{U}} \psi(e_{\gamma} a).
\end{align}

(2) Let us fix $c \in \mc J$. Then from \eqref{unifcalc1} and the fact that $(e_{\gamma})$ is a b.a.i. for $\mc J$ we obtain $p \psi(c) = \wslim_{\gamma \rightarrow \mathcal{U}} \psi(e_{\gamma} c) = \psi(c)$. An analogous argument shows $\psi(c)p = \psi(c)$.

(3) Let us fix $a \in \mc A$. Since $e_{\gamma} a \in \mc J$ for any $\gamma \in \Gamma$, it follows from (2) that $\psi(e_{\gamma} a) = \psi(e_{\gamma} a) p = \psi(e_{\gamma}) \psi(a) p$. From this and \eqref{unifcalc1} we obtain
\begin{align*}
p \psi(a) &= \wslim\limits_{\gamma \rightarrow \mathcal{U}} \psi(e_{\gamma} a) = \wslim\limits_{\gamma \rightarrow \mathcal{U}} \psi(e_{\gamma}) \psi(a) p  = \left(\wslim\limits_{\gamma \rightarrow \mathcal{U}} \psi(e_{\gamma}) \right) \psi(a) p =p \psi(a) p.
\end{align*}
A similar argument shows $\psi(a)p = p \psi(a) p$.

(4) It is clear that $\theta$ is a bounded linear map. Let us first fix $a \in \mathcal{A}$. From (3) we have $\psi(a)p = p \psi(a)p$ and hence
\begin{align}\label{denserange}
\theta(a) &= \psi(a) - \psi(a)p - p \psi(a) +p \psi(a)p = \psi(a) - p \psi(a).
\end{align}
From the above, another application of (3), and the fact that $\psi$ is an algebra homomorphism it follows that $\theta$ is multiplicative. Finally, it is straightforward from (2) that  $\theta |_{\mc J} =0$.	
\end{proof}

Before we proceed let us recall some basic probability-theoretic background and terminology. In the brief exposition below we follow Fremlin's book \cite{Fremlin}*{Sections~254J--254R}.
\begin{Rem}\label{condexp}
	We consider the the probability space $(\lbrace 0,1 \rbrace, \mathcal{P}(\lbrace 0,1 \rbrace), \mu)$ where $\mu$ is the ``fair coin'' probability measure, i.e.,  $\mu(\lbrace 0 \rbrace) = 1/2 = \mu(\lbrace 1 \rbrace)$. Let $(\lbrace 0,1 \rbrace^{\mathbb{N}}, \Lambda, \nu)$ denote the product of the family $\big((\lbrace 0,1 \rbrace, \mathcal{P}(\lbrace 0,1 \rbrace), \mu) \big)_{n \in \mathbb{N}}$ of probability spaces. The measure space $(\lbrace 0,1 \rbrace^{\mathbb{N}}, \Lambda, \nu)$ is isomorphic to $([0,1], \mathcal{A}, \lambda)$, where $\lambda$ is the Lebesgue measure restricted to $[0,1]$. 
	Consequently,  for all $p$ such that   $1 \leq p < \infty$, the Banach spaces $L_p(\lbrace 0,1 \rbrace^{\mathbb{N}}, \Lambda, \nu)$ and $L_p([0,1], \mathcal{A}, \lambda)$ are isometrically isomorphic (see also \cite{albiackalton}*{page~125}).
	
	For any $S \subseteq \mathbb{N}$ let us define 
	\begin{align*}
	\pi_{S} \colon \lbrace 0, 1 \rbrace^{\mathbb{N}} \rightarrow \lbrace 0, 1 \rbrace^{S}; \quad (x_n)_{n \in \mathbb{N}} \mapsto (x_n)_{n \in S}
	\end{align*}
	and
	\begin{align*}
	\Lambda_{S}= \Big\lbrace A \in \Lambda \colon A = \pi_{S}^{-1}[\pi_{S}[A]] \Big\rbrace.
	\end{align*}
	The set $\Lambda_S$ is a $\sigma$-subalgebra of $\Lambda$. In the case when $S$ is an infinite subset of $\mathbb{N}$, it follows that $(\lbrace 0,1 \rbrace^{\mathbb{N}}, \Lambda_S, \nu \vert_{\Lambda_S})$ is isomorphic to $([0,1], \mathcal{A}, \lambda)$. Thus $L_p(\lbrace 0,1 \rbrace^{\mathbb{N}}, \Lambda_S, \nu \vert_{\Lambda_S})$ and $L_p([0,1], \mathcal{A}, \lambda)$ are isometrically isomorphic as Banach spaces, where $1 \leq p < \infty$. On the other hand, if $S$ is a finite subset of $\mathbb{N}$, then $L_p(\lbrace 0,1 \rbrace^{\mathbb{N}}, \Lambda_S, \nu \vert_{\Lambda_S})$ is a finite-dimensional Banach space; this follows easily from the fact that $\Lambda_S$ is a finite set in that case.
\end{Rem}

The above technique is well known among experts in Banach space theory. We refer the interested reader to \cite{jsch} for a more sophisticated approach. 

Part (2) of the following result we learned from William~B.~Johnson, and it forms part of ongoing joint work between W.~B.~Johnson, N.~C.~Phillips and G.~Schechtman. With their kind permission we give our version of the proof here.

\begin{Prop}\label{exidemp}
	Let $X$ be a Banach space such that one of the following two conditions is satisfied.
	\begin{enumerate}[label=\textup{(\arabic*)}]
		\item $X$ has a subsymmetric Schauder basis; or
		\item $X = L_p[0,1]$ where $1 \leq p < \infty$.
	\end{enumerate}
	Then $\mathcal{B}(X)$ admits a bounded set $\mathcal{Q}$ of commuting, almost orthogonal idempotents such that $\vert \mathcal{Q} \vert = 2^{\aleph_0}$ and $\Ran(P) \simeq X$ for every $P \in \mathcal{Q}$. 
\end{Prop}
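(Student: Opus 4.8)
The plan is to build $\mc Q$ from an almost disjoint family, handling both hypotheses by the same template. Fix an almost disjoint family $\mc F \subseteq \mc P(\mathbb{N})$ consisting of infinite sets with $\vert \mc F \vert = 2^{\aleph_0}$, whose existence is recalled in Section~\ref{sec:prelin}. In each case I would attach to every $A \subseteq \mathbb{N}$ a canonical idempotent $R_A \in \mc B(X)$ so that the assignment $A \mapsto R_A$ is uniformly bounded and obeys the multiplicative law $R_A R_B = R_{A \cap B}$, and then set $\mc Q = \lbrace R_A \colon A \in \mc F \rbrace$. Granting these two features, the required properties are immediate: commutativity follows from the symmetry of $A \cap B$; for distinct $A, B \in \mc F$ the set $A \cap B$ is finite, so $R_A R_B = R_A R_B = R_{A\cap B}$ will be finite-rank, giving almost orthogonality; and $\vert \mc Q \vert = 2^{\aleph_0}$ once $A \mapsto R_A$ is shown injective on $\mc F$, which I would do by exhibiting for $n \in A \setminus B$ a vector that $R_A$ and $R_B$ treat differently. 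It then remains to produce the $R_A$ and to verify $\Ran(R_A) \simeq X$ for infinite $A$.

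In case~(1), with $(b_n)_{n \in \mathbb N}$ a subsymmetric (hence unconditional) basis, I take $R_A = P_A$ as in \eqref{equividemp}. Uniform boundedness of $(P_A)_{A \in \mc P(\mathbb{N})}$ is recorded there, and $P_A P_B = P_{A \cap B}$ is the standard biorthogonality computation $\langle b_m, f_n\rangle = \delta_{mn}$. For finite $A \cap B$ the space $\Ran(P_{A \cap B}) = \spanning\lbrace b_n \colon n \in A \cap B\rbrace$ is finite-dimensional. For infinite $A$, writing $A = \lbrace \sigma(1) < \sigma(2) < \cdots \rbrace$ and applying Proposition~\ref{subsymnorm}, the shift $S_\sigma$ is an isomorphism of $X$ onto $\overline{\spanning}\lbrace b_n \colon n \in A\rbrace = \Ran(P_A)$, so $\Ran(P_A) \simeq X$.

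In case~(2), I identify $X = L_p[0,1]$ with $L_p(\lbrace 0,1\rbrace^{\mathbb N}, \Lambda, \nu)$ as in Remark~\ref{condexp}, and for $A \subseteq \mathbb{N}$ let $R_A = E_A$ be the conditional expectation onto the $\sigma$-subalgebra $\Lambda_A$. Each $E_A$ is a norm-one idempotent on $L_p$ for $1 \le p < \infty$, so the family is uniformly bounded. The identity $E_A E_B = E_{A \cap B}$ is where the product structure enters, and is the step I expect to require the most care: a $\Lambda_B$-measurable function depends only on the coordinates in $B$, and averaging out the coordinates in $A^{c}$ integrates out exactly those in $B \setminus A$; by independence of the coordinates the result depends only on the coordinates in $A \cap B$ and equals $E_{A \cap B}$ applied to the same function. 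Granting this, for finite $A \cap B$ the algebra $\Lambda_{A \cap B}$ is finite, so $\Ran(E_{A \cap B}) = L_p(\Lambda_{A \cap B})$ is finite-dimensional and $E_A E_B$ is finite-rank; for infinite $A$, Remark~\ref{condexp} identifies $\Ran(E_A) = L_p(\lbrace 0,1\rbrace^{\mathbb N}, \Lambda_A, \nu\vert_{\Lambda_A})$ isometrically with $L_p[0,1] = X$. Injectivity of $A \mapsto E_A$ is clear, since for $n \in A \setminus B$ the $n$-th coordinate function is fixed by $E_A$ but sent to its mean by $E_B$.

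The genuinely nontrivial ingredients are the range identifications — subsymmetry in case~(1) via Proposition~\ref{subsymnorm}, and the measure isomorphism of $(\lbrace 0,1\rbrace^{\mathbb N}, \Lambda_A, \nu\vert_{\Lambda_A})$ with $[0,1]$ in case~(2) via Remark~\ref{condexp} — together with the commuting-projection identity $R_A R_B = R_{A\cap B}$, which in case~(2) rests squarely on the independence of the coordinates and is the main obstacle. Once these are in place, the four defining properties of $\mc Q$ are routine verifications.
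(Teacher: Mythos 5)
Your proposal is correct and follows essentially the same route as the paper's proof: an almost disjoint family of infinite subsets of $\mathbb{N}$, coordinate projections $P_A$ with $P_A P_B = P_{A\cap B}$ and Proposition~\ref{subsymnorm} in case~(1), and conditional expectations onto the $\sigma$-subalgebras $\Lambda_A$ with the product identity $\mathbb{E}(\cdot\vert\Lambda_A)\mathbb{E}(\cdot\vert\Lambda_B) = \mathbb{E}(\cdot\vert\Lambda_{A\cap B})$ (which the paper cites from Fremlin, Theorem~254Ra, rather than rederiving from independence) and Remark~\ref{condexp} in case~(2). The only cosmetic difference is that the paper makes the transfer back to $L_p[0,1]$ explicit by conjugating with an isomorphism $T$, where you instead identify the two spaces at the outset; both are legitimate.
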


\begin{proof}
	We take an almost disjoint family $\mathcal{D}$ of continuum cardinality consisting of infinite subsets of $\mathbb{N}$.
	
	(1) Suppose $X$ has a subsymmetric Schauder basis $(b_n)$ with coordinate functionals $(f_n)$. Let $\mathcal{Q}= \lbrace P_N \rbrace_{N \in \mathcal{D}}$, where for $N \in \mathcal{D}$
\begin{align*}
P_N x= \sum\limits_{n \in N} \langle x, f_n \rangle b_n \quad (\text{for all } x \in X)
\end{align*}
defines an idempotent in $\mathcal{B}(X)$. Clearly $P_N P_M = P_{N \cap M} = P_M P_N$ has finite rank for distinct $N, M \in \mathcal{D}$. Also, $\Ran(P_N) \simeq X$ for every $N\in\mc D$ due to Proposition~\ref{subsymnorm}. The set $\mathcal{Q}$ is bounded by the second paragraph in Section~\ref{bases}.
	
	(2) In the notation of Remark~\ref{condexp}, for every $N \in \mathcal{D}$ we consider the conditional expectation operator
	\begin{align}
	\mathbb{E}(\cdot \vert \Lambda_N) \colon L_p(\lbrace 0, 1 \rbrace^{\mathbb{N}}, \Lambda, \mu) \rightarrow L_p(\lbrace 0, 1 \rbrace^{\mathbb{N}}, \Lambda, \mu); \quad f \mapsto \mathbb{E}(f \vert \Lambda_N).
	\end{align}
	By \cite{albiackalton}*{Lemma~6.1.1}, for any $N \in \mathcal{D}$ the bounded linear operator $\mathbb{E}(\cdot \vert \Lambda_N)$ is a norm one idempotent with range $L_p(\lbrace 0, 1 \rbrace^{\mathbb{N}}, \Lambda_N, \mu \vert_{\Lambda_N})$. In particular $\Ran(\mathbb{E}(\cdot \vert \Lambda_N))$ is isomorphic to $L_p([0,1], \mathcal{A}, \lambda)$ for each $N \in \mathcal{D}$. It follows from \cite{Fremlin}*{Theorem~254Ra} that for any two distinct $N,M \in \mathcal{D}$
	\begin{align*}
	\mathbb{E}(\cdot \vert \Lambda_N) \mathbb{E}(\cdot \vert \Lambda_M) = \mathbb{E}(\cdot \vert \Lambda_{N \cap M}),
	\end{align*}
	where $\Ran \big( \mathbb{E}(\cdot \vert \Lambda_{N \cap M}) \big) =L_p(\lbrace 0, 1 \rbrace^{\mathbb{N}}, \Lambda_{N \cap M}, \mu \vert_{\Lambda_{N \cap M}})$ is finite-dimensional.
	
	Let $T \colon L_p([0,1], \mathcal{A}, \lambda) \rightarrow L_p(\lbrace 0, 1 \rbrace^{\mathbb{N}}, \Lambda, \mu)$ be an isomorphism. Let $P_N= \mathbb{E}(\cdot \vert \Lambda_N)$ and $Q_N= T^{-1}   P_N   T$ for all $N \in \mathcal{D}$. Then $Q_N \in \mathcal{B}(L_p[0,1])$ is idempotent with $\Ran(Q_N) \simeq \Ran(P_N)$ and thus 
\begin{align*}
\Ran(Q_N) \simeq \Ran(P_N) = L_p(\lbrace 0, 1 \rbrace^{\mathbb{N}}, \Lambda_N, \mu \vert_{\Lambda_N}) \simeq L_p([0,1], \mathcal{A}, \lambda).
\end{align*}
Since $\Ran(Q_N Q_M)$ is finite-dimensional for distinct $N,M \in \mathcal{D}$ we obtain that the set $\mathcal{Q}= \lbrace Q_N \rbrace_{N \in \mathcal{D}}$ satisfies all of our requirements.
\end{proof}

The following fact is standard; we leave its proof to the reader.

\begin{Lem}\label{distofidemps}
Let $X$ be a Banach space. Let $\{Q_i\}_{i \in I}$ be a bounded set of mutually orthogonal, non-zero idempotents in $\mathcal{B}(X)$. Then for the density of $X$ we have $\density(X) \geq | I |$.
\end{Lem}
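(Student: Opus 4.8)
The plan is to turn the orthogonality relations into a uniform \emph{separation} estimate for a suitably chosen family of unit vectors, one in the range of each idempotent, and then observe that a separated family of cardinality $|I|$ forces every dense set to have at least $|I|$ points.

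First I would record the uniform bound. By hypothesis $K := \sup_{i \in I} \Vert Q_i \Vert < \infty$, and since each $Q_i$ is a non-zero idempotent we have $\Vert Q_i \Vert \geq 1$, so $1 \leq K < \infty$. For every $i \in I$ the operator $Q_i$ is non-zero, hence $\Ran(Q_i) \neq \lbrace 0 \rbrace$, and I may choose $x_i \in \Ran(Q_i)$ with $\Vert x_i \Vert = 1$. Because $x_i$ lies in the range of the idempotent $Q_i$ we have $Q_i x_i = x_i$, while mutual orthogonality gives $Q_j x_i = Q_j Q_i x_i = 0$ for every $j \neq i$.

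Next I would derive the separation estimate. For distinct $i, j \in I$,
\begin{align*}
\Vert x_i - x_j \Vert \geq \frac{\Vert Q_i(x_i - x_j) \Vert}{\Vert Q_i \Vert} = \frac{\Vert x_i \Vert}{\Vert Q_i \Vert} \geq \frac{1}{K},
\end{align*}
so $\lbrace x_i \rbrace_{i \in I}$ is a $\tfrac{1}{K}$-separated family of cardinality $|I|$. Now let $D \subseteq X$ be any dense subset. For each $i \in I$ density lets me pick $d_i \in D$ with $\Vert x_i - d_i \Vert < \tfrac{1}{2K}$. If $i \neq j$, then
\begin{align*}
\Vert d_i - d_j \Vert \geq \Vert x_i - x_j \Vert - \Vert x_i - d_i \Vert - \Vert x_j - d_j \Vert > \frac{1}{K} - \frac{1}{2K} - \frac{1}{2K} = 0,
\end{align*}
so the assignment $i \mapsto d_i$ is injective and hence $|D| \geq |I|$. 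Taking the infimum over all dense subsets $D$ yields $\density(X) \geq |I|$, as required.

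The argument is essentially routine, and there is no serious obstacle; the only point that genuinely needs care is that the separation constant $\tfrac{1}{K}$ be \emph{uniform} in $i$, which is exactly what the boundedness hypothesis on $\lbrace Q_i \rbrace_{i \in I}$ provides. Without a uniform bound on the norms the estimate $\Vert x_i - x_j \Vert \geq 1/\Vert Q_i \Vert$ would degenerate and the conclusion could fail.
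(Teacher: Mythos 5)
Your proof is correct, and since the paper explicitly leaves this lemma to the reader as a standard fact, your separation argument (unit vectors $x_i \in \Ran(Q_i)$ with $\Vert x_i - x_j \Vert \geq 1/K$, forcing any dense subset to contain $|I|$ distinct points) is precisely the intended one. One remark: your closing suggestion that the conclusion could fail without the uniform bound is overstated --- the lemma in fact holds for any family of mutually orthogonal non-zero idempotents, since each $Q_i$ is individually continuous, annihilates every $x_j$ with $j \neq i$, and fixes $x_i$, so that $x_i \notin \overline{\spanning}\lbrace x_j \colon j \neq i \rbrace$, and hereditary density of metric spaces then yields $|I| \leq \density(X)$; the uniform bound merely makes the quantitative $1/K$-separation available, which is all the paper needs.
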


\begin{Rem}\label{idempinid}
Let $\mc A$ be an algebra, let $\mc J \trianglelefteq \mc A$ be a two-sided ideal. If $p,q \in \mc A$ are idempotents with $p \sim q$, then $p \in \mc J$ if and only if $q \in \mc J$. Indeed, let $a,b \in \mathcal{A}$ be such that $ab=p$ and $ba=q$. Hence $p=p^2=abab=aqb$ and similarly $q=bpa$.
\end{Rem}

The following proposition is a dichotomy result about separable representations of $\mc B (X)$ for certain Banach spaces $X$, in the sense of Berkson and Porta \cite{berksonporta}. In particular, Proposition~\ref{injorzero} generalizes their result \cite{berksonporta}*{Corollary~6.16}.

\begin{Prop}\label{injorzero}
Let $X$ be a Banach space such that one of the following two conditions is satisfied.
	\begin{enumerate}[label=\textup{(\arabic*)}]
		\item $X$ has a subsymmetric Schauder basis; or
		\item $X = L_p[0,1]$ where $1 \leq p < \infty$.
	\end{enumerate}
Let $Y$ be a separable Banach space. Let $\theta \colon \mc B (X) \rightarrow \mc B (Y)$ be a continuous algebra homomorphism. Then $\theta$ is either injective or $\theta =0$.
\end{Prop}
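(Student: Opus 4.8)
The plan is to argue by contradiction, supposing that $\theta$ is neither injective nor identically zero, and to extract from this an impossibly large family of mutually orthogonal idempotents inside $\mc B(Y)$. The engine is Proposition~\ref{exidemp}: under either hypothesis on $X$ it supplies a bounded family $\mc Q$ of commuting, almost orthogonal idempotents with $\vert \mc Q \vert = 2^{\aleph_0}$ and $\Ran(P) \simeq X$ for every $P \in \mc Q$.

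First I would exploit the failure of injectivity. As $\theta$ is a continuous homomorphism, $\ker\theta$ is a closed, two-sided ideal of $\mc B(X)$; non-injectivity makes it non-zero, while $\theta \neq 0$ makes it proper. Since $\mc A(X)$ is the smallest non-zero, proper, closed, two-sided ideal of $\mc B(X)$, it follows that $\mc A(X) \subseteq \ker\theta$, and in particular $\theta$ annihilates every finite-rank operator.

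Next I would convert almost orthogonality into genuine orthogonality of the images. For distinct $P, R \in \mc Q$ the products $PR$ and $RP$ have finite rank, hence lie in $\ker\theta$, so that $\theta(P)\theta(R) = 0 = \theta(R)\theta(P)$; multiplicativity of $\theta$ also makes each $\theta(P)$ an idempotent. Thus $\{\theta(P)\}_{P \in \mc Q}$ is a family of mutually orthogonal idempotents in $\mc B(Y)$, bounded because $\theta$ is continuous and $\mc Q$ is bounded.

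The decisive and most delicate step is to verify that these idempotents are non-zero. Since $\Ran(P) \simeq X = \Ran(I_X)$, Lemma~\ref{equivisom} yields $P \sim I_X$ in $\mc B(X)$ for every $P \in \mc Q$. On the other hand $\theta \neq 0$ forces $\theta(I_X) \neq 0$---otherwise $\theta(A) = \theta(I_X)\theta(A) = 0$ for all $A$---so $I_X \notin \ker\theta$, and Remark~\ref{idempinid} (applied to the ideal $\ker\theta$) then gives $P \notin \ker\theta$, i.e. $\theta(P) \neq 0$ for every $P \in \mc Q$. Consequently $\{\theta(P)\}_{P \in \mc Q}$ is a bounded family of $2^{\aleph_0}$ mutually orthogonal, non-zero idempotents, whence Lemma~\ref{distofidemps} gives $\density(Y) \geq 2^{\aleph_0}$, contradicting the separability of $Y$. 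This contradiction shows that a non-zero $\theta$ cannot fail to be injective, which is the assertion. I expect the non-vanishing of the $\theta(P)$ to be the crux: almost orthogonality by itself does nothing to prevent the images from collapsing to $0$, and it is exactly the equivalence $P \sim I_X$, combined with Remark~\ref{idempinid}, that keeps them alive.
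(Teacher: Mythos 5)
Your proposal is correct and follows essentially the same route as the paper's own proof: it invokes Proposition~\ref{exidemp} for the continuum-sized family of almost orthogonal idempotents, uses that $\Ker(\theta)$ contains the finite-rank operators to make the images genuinely orthogonal, establishes non-vanishing via $P \sim I_X$ (Lemma~\ref{equivisom} plus Remark~\ref{idempinid}), and concludes with the density bound of Lemma~\ref{distofidemps}. The only cosmetic difference is that you pass through $\mc A(X) \subseteq \Ker(\theta)$ where the paper writes $\mc K(X) \subseteq \Ker(\theta)$, which coincides here since $\mc A(X) = \mc K(X)$ for these spaces.
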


\begin{proof}
Assume towards a contradiction that $\theta$ is not injective and $\theta \neq 0$. In particular $\mc K (X) \subseteq \Ker(\theta)$. By Proposition~\ref{exidemp}, $\mathcal{B}(X)$ admits a bounded set $\{P_i\}_{i \in I}$ of commuting, almost orthogonal idempotents such that $| I | = 2^{\aleph_0}$ and $\Ran(P_i) \simeq X$ for each $i \in I$. We \textit{claim} that $\{\theta(P_i)\}_{i \in I}$ is a bounded set of mutually orthogonal, non-zero idempotents of continuum cardinality in $\mc B (Y)$. To see this, we observe first that  $\theta(P_i) \theta(P_j) = \theta(P_i P_j) =0$ for each distinct $i,j \in I$, as $P_iP_j \in \mc K (X) \subseteq \Ker(\theta)$. Now observe that $\theta(P_i)$ is non-zero for each $i \in I$. Indeed, $I_X \notin \Ker(\theta)$ as $\theta$ is non-zero. Since $\Ran(P_i) \simeq X$, in view of Lemma~\ref{equivisom} this means $P_i \sim I_X$ for all $i \in I$. Thus by Remark~\ref{idempinid} we obtain $P_i \notin \Ker(\theta)$ for all $i \in I$. This shows the claim. But now with Lemma~\ref{distofidemps} we obtain $\density(Y) \geq 2^{\aleph_0}$, a contradiction.
\end{proof}

\section{Proof of the main results}

\subsection{The proof of Theorem \ref{main}}

We are now in position to prove our main result. Before we get to it, let us mention that the techniques below are akin to those employed by Moln\'ar to Hilbert spaces in \cite{molnar1}. Some of these techniques go back to at least Johnson's seminal work on approximately multiplicative maps between Banach algebras in \cite{amnm2}.

\begin{proof}[Proof of Theorem~\ref{main}]
Since $Y$ is reflexive, from Lemma~\ref{bxisdualbanalg} we know that $\mc B (Y)$ is a dual Banach algebra with predual $Y \widehat{\otimes}_{\pi} Y^*$.
	
If $X$ has a subsymmetric basis, let this be denoted by $(b_n)$. If $X = L_p[0,1]$, where $1<p< \infty$, then $(b_n)$ denotes the Haar basis. In both cases $(f_n)$ stands for the sequence of coordinate functionals associated to $(b_n)$. As $X$ is reflexive, it follows from Corollary~\ref{aiai2} and the comment after Proposition~\ref{subsymnorm} that the sequence of coordinate projections $(P_n)$ is a b.a.i. for $\mathcal{K}(X)$.
	
Since $(\phi(P_n))$ is a bounded, monotone increasing sequence of idempotents in $\mc B (Y)$ it follows from Lemma~\ref{projsot} that there exists an idempotent $P \in \mathcal{B}(Y)$ such that $(\phi(P_n))$ converges to $P$ in the strong operator topology. We show that in fact $P = I_Y$. To this end we consider the map
\begin{align*}
\theta \colon \mc B(X) \rightarrow \mc B (Y); \quad A \mapsto (I_Y - P) \phi(A) (I_Y -P).
\end{align*}
By Lemma~\ref{techlem1} the map $\theta$ is a continuous algebra homomorphism with $\theta |_{\mc K (X)} =0$. Due to separability of $Y$, Proposition~\ref{injorzero} yields $\theta =0$. By the assumption, we can take $T \in \mc B (X)$ such that $\phi(T)$ has dense range. Consequently
\begin{align*}
0= \theta(T) = (I_Y - P) \phi(T) (I_Y - P) = (I_Y -P) \phi(T)
\end{align*}
by \eqref{denserange}. So $(I_Y -P) |_{\Ran(\phi(T))} =0$ and $\Ran(\phi(T))$ is dense in $Y$, hence $P=I_Y$.

Let $x_0 \in X$ be such that $\|x_0\| = 1$, and choose $f_0 \in X^*$  such that $\langle x_0, f_0 \rangle =1 = \|f_0 \|$. As $\phi$ is injective, we can pick $y_0 \in Y^*$ with $\| y_0 \| =1$ such that $\phi(x_0 \otimes f_0) y_0 \neq 0$. Thus we can define the non-zero map
\begin{align*}
S \colon X \rightarrow Y; \quad x \mapsto \phi(x \otimes f_0)y_0
\end{align*}
which is easily seen to be linear and bounded. We observe that
\begin{align}\label{implem}
SA = \phi(A)   S \qquad (\text{for all } A \in \mc B (X)).
\end{align}
Indeed, fix $A \in \mc B (X)$ and $x \in X$. Then
\begin{align*}
\phi(A)Sx = \phi(A)\phi(x \otimes f_0)y_0 = \phi(A(x \otimes f_0))y_0 = \phi(Ax \otimes f_0)y_0 = SAx.
\end{align*}
In the following we show that $S$ is an isomorphism.

We observe that $S$ is injective. For assume in search of a contradiction it is not; let $x \in X$ satisfy $Sx = 0$ and $\|x \|=1$. Let $f \in X^*$ be such that $\langle x, f \rangle = 1 = \|f \|$. Then in view of \eqref{implem} we have
\begin{align*}
0= \phi(z \otimes f)Sx = S(z \otimes f)x = S(\langle x, f \rangle z) = Sz \quad (\text{for all }z \in X).
\end{align*}
Thus $S=0$, a contradiction.

We show that $S$ has closed range. To this end, let $(x_n)$ be a sequence in $X$ such that $(Sx_n)$ converges to some $y \in Y$. Let $x \in X$ be non-zero. As $S$ is injective, we have $Sx \neq 0$. Thus we can choose $h \in Y^*$ with $\langle Sx, h \rangle =1$. Let $f \in X^*$ be arbitrary fixed, then by \eqref{implem} we have
\begin{align*}
\langle x_n, f \rangle Sx = S(x \otimes f)x_n =\phi(x \otimes f)Sx_n \quad (\text{for all }n \in \mathbb{N}).
\end{align*}
Hence $\langle x_n, f \rangle Sx\to\phi(x \otimes f)y \in Y$ and therefore $\langle x_n, f \rangle\to \langle \phi(x \otimes f)y, h \rangle$. As $f \in X^*$ was arbitrary, this shows that $(x_n)$ is a weak Cauchy sequence in $X$. Since $X$ is reflexive, it is weakly sequentially complete (see \textit{e.g.} \cite{conway}*{Chapter~V, Corollary~4.4}), hence $(x_n)$ converges weakly to some $x' \in X$. As $S$ is weakly continuous, $Sx_n\to Sx'$ weakly in $Y$. But $(Sx_n)$ converges in norm to $y \in Y$, so it also converges to $y$ weakly. By uniqueness of the weak limit $Sx' =y$.

It remains to show that that $S$ has dense range. Clearly $b_n \otimes f_n \in \mc B (X)$ is a rank one idempotent, hence by the assumption $\phi(b_n \otimes f_n) \in \mc B (Y)$ is a rank one idempotent too for each $n \in \mathbb{N}$. Let $u_n \in Y$ and $h_n \in Y^*$ be such that $\phi(b_n \otimes f_n)= u_n \otimes h_n$ and $\langle u_n, h_n \rangle =1$. Recall that $(\phi(P_n))$ converges to $I_Y$ in the strong operator topology, consequently
\begin{align*}
x= \lim\limits_{n \rightarrow \infty} \phi(P_n)x = \sum\limits_{i=1}^{\infty} \phi(b_i \otimes f_i)x = \sum\limits_{i=1}^{\infty} (u_i \otimes h_i)x = \sum\limits_{i=1}^{\infty} \langle x, h_i \rangle u_i \quad (\text{for all }x \in X).
\end{align*}
This shows $X= \overline{\spanning} \{u_n : \, n \in \mathbb{N} \}$. To conclude the proof, it suffices to show that $u_n \in \Ran(S)$ for each $n \in \mathbb{N}$. This essentially follows from \eqref{implem}, as 
\begin{align}\label{mainfinal}
Sb_n = S(b_n \otimes f_n)b_n = \phi(b_n \otimes f_n)Sb_n = (u_n \otimes h_n)Sb_n = \langle Sb_n, h_n \rangle u_n
\end{align}
for each $n \in \mathbb{N}$. Injectivity of $S$ implies that $Sb_n$ is non-zero, hence $\langle Sb_n, h_n \rangle \neq 0$ by \eqref{mainfinal}. Therefore $u_n \in \Ran(S)$ indeed. 

Thus \eqref{implem} amounts to
\begin{align*}
\phi(A) = S   A   S^{-1} \quad (\text{for all }A \in \mc B (X)),
\end{align*}
which proves that $\phi$ is an isomorphism.
\end{proof}

\begin{Ex}
Each of the following spaces is reflexive and has a subsymmetric basis, hence satisfies the conditions of Theorem~\ref{main}~(2):
\begin{enumerate}
	\item[(a)] The sequence spaces $\ell_p$, where $1<p<\infty$ (see Section \ref{bases});
	\item[(b)] every reflexive Orlicz sequence space $l_M$ with Orlicz function $M$ satisfying the $\Delta_2$-condition $\limsup_{t\to0} M(2t)/M(t)<\infty$ (see \cite{lindenstrauss1}*{Propositions~4.a.4 and 3.a.3});
	\item[(c)] every Lorentz sequence space $d(w,p)$, where $p>1$, $w=(w_n)_{n\in\mathbb{N}}$ is non-increasing, $w_1=1$, $\lim_{n\to\infty} w_n=0$ and $\sum_{n=1}^\infty w_n=\infty$ (see \cite{lindenstrauss1}*{Propositions~4.e.3~and~1.c.12}).
\end{enumerate}
\end{Ex}

\begin{Rem}
	In the proof of Theorem~\ref{main} the cornerstone of our argument is that $X$ has \textit{uncountably many} complemented subspaces, each of which is isomorphic to $X$ itself, but any two have a finite-dimensional intersection. We hope that Remark~\ref{conclrem} at the end of the paper sheds some light on why this phenomenon might be essential.
\end{Rem}

\subsection{The proof of Theorem~\ref{stability}}

In the following let $X$ and $Y$ be arbitrary non-zero Banach spaces, and let $\psi, \phi \colon \mc B (X) \rightarrow \mc B (Y)$ be algebra homomorphisms such that $\| \psi(A) - \phi(A) \| < \|A\|$ for each non-zero $A \in \mc B (X)$. 

With an application of the triangle inequality we arrive at the simple but useful estimate
\begin{align}\label{strictineq}
\|\psi(A)\| &\leq \| \psi(A) - \phi(A) \| + \|\phi(A) \| < \| A \| + \| \phi(A) \|.
\end{align}
Similarly we obtain $\|\phi(A)\| < \| A \| + \| \psi(A) \|$. In particular, these estimates immediately yield that $\phi$ is continuous if and only if $\psi$ is continuous.

\begin{Lem}\label{injeq}
Let $P \in \mc B (X)$ be a norm one idempotent. Then $P \in \Ker(\phi)$ if and only if $P \in \Ker(\psi)$. Consequently, $\psi$ is injective if and only if $\phi$ is injective.
\end{Lem}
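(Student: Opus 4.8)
The plan is to exploit two elementary facts: that algebra homomorphisms carry idempotents to idempotents, and that a nonzero idempotent in any normed algebra has norm at least one (if $q = q^2 \neq 0$, then $\|q\| = \|q^2\| \leq \|q\|^2$, whence $\|q\| \geq 1$). Since the standing hypothesis $\| \psi(A) - \phi(A) \| < \|A\|$ is symmetric in $\phi$ and $\psi$, it suffices to prove one implication of each biconditional and invoke symmetry for the other.

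For the first assertion, suppose $P \in \Ker(\phi)$. As $P$ is a norm one idempotent it is in particular nonzero, so applying the hypothesis to $A = P$ yields
\[
\|\psi(P)\| = \|\psi(P) - \phi(P)\| < \|P\| = 1.
\]
On the other hand, $\psi(P)^2 = \psi(P^2) = \psi(P)$, so $\psi(P)$ is an idempotent of norm strictly less than $1$; by the norm estimate above for idempotents this forces $\psi(P) = 0$, i.e. $P \in \Ker(\psi)$. Interchanging the roles of $\phi$ and $\psi$ gives the reverse implication.

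To deduce the consequence about injectivity I would argue contrapositively. Suppose $\phi$ is not injective. Then $\Ker(\phi)$ is a nonzero two-sided ideal of $\mc B(X)$, and since $\mc{F}(X)$ is the smallest nonzero two-sided ideal, $\Ker(\phi)$ contains every rank one operator. Choosing $x_0 \in X$ and $f_0 \in X^*$ of norm one with $\langle x_0, f_0 \rangle = 1$ (possible by Hahn--Banach), the rank one operator $P = x_0 \otimes f_0$ is a norm one idempotent lying in $\Ker(\phi)$. By the first part $P \in \Ker(\psi)$, and since $P \neq 0$ this shows $\psi$ is not injective either; symmetry yields the converse.

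The only ingredient beyond routine manipulation is the structural fact that every nonzero two-sided ideal of $\mc B(X)$ contains all finite rank operators, and this is the step where I would be most careful: given a nonzero $T$ in the ideal, one recovers an arbitrary rank one operator $y \otimes g$ by sandwiching, noting that $(y \otimes h)\, T\, (z \otimes g) = \langle Tz, h \rangle\, (y \otimes g)$ and choosing $z, h$ with $\langle Tz, h \rangle = 1$. Everything else is immediate from the norm inequality and the idempotent estimate, so I do not expect any genuine obstacle here.
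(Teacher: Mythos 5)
Your proposal is correct and follows essentially the same route as the paper: the first part uses the norm estimate to show $\psi(P)$ is an idempotent of norm less than one (hence zero), and the ``consequently'' part argues contrapositively via a norm-one rank-one idempotent lying in the kernel, using that a non-injective homomorphism's kernel is a nonzero two-sided ideal and therefore contains $\mathcal{F}(X)$. The only difference is cosmetic: you spell out the sandwiching argument for why every nonzero two-sided ideal contains the rank-one operators, a fact the paper treats as standard.
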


\begin{proof}
Assume $P \in \Ker(\phi)$. Then it follows from \eqref{strictineq} that $\|\psi(P)\| < \|P \| = 1$. As $\psi(P) \in \mc B (Y)$ is an idempotent, this is equivalent to saying $\psi(P) =0$. The other direction follows analogously.

In order to show the ``consequently'' part suppose contrapositively that $\psi$ is not injective. Let $x \in X$ be such that $\|x \| =1$. Pick an $f \in X^*$ with $\langle x,f \rangle = 1 = \|f\|$. Hence $x \otimes f \in \mc F (X)$ is a norm one idempotent. In particular $x \otimes f \in \Ker(\psi)$, which by the first part of the lemma is equivalent to $x \otimes f \in \Ker(\phi)$. This shows that $\phi$ is not injective. Similarly, one obtains that injectivity of $\psi$ implies injectivity of $\phi$.
\end{proof}

\begin{Prop}\label{corzemanek}
Let $P \in \mc B (X)$ be a norm one idempotent. Then $\Ran(\psi(P)) \simeq \Ran(\phi(P))$. If $\psi$ is surjective, then $\phi(I_X) = I_Y$. Moreover, if $\psi$ is an isomorphism, then $\Ran(\phi(P)) \simeq \Ran(P)$.
\end{Prop}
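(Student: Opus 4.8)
The three assertions should be handled in turn: the first two are quick consequences of the preparatory lemmas, while the third requires genuinely more. For the first assertion, fix a norm one idempotent $P \in \mc B(X)$. Since $\psi$ and $\phi$ are homomorphisms, $\psi(P)$ and $\phi(P)$ are idempotents in $\mc B(Y)$, and as $P \neq 0$ the inequality \eqref{mainineq} gives $\|\psi(P) - \phi(P)\| < \|P\| = 1$. I would then feed this into Lemma~\ref{zemanek} to conclude $\psi(P) \sim \phi(P)$ in the unital Banach algebra $\mc B(Y)$, and finally invoke the ``in particular'' clause of Lemma~\ref{equivisom} (with $X_1 = X_2 = Y$) to pass from the equivalence to the isomorphism $\Ran(\psi(P)) \simeq \Ran(\phi(P))$.

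For the second assertion, suppose $\psi$ is surjective. Then $\psi(I_X)\psi(A) = \psi(A) = \psi(A)\psi(I_X)$ for every $A$, so $\psi(I_X)$ is a two-sided identity for $\Ran(\psi) = \mc B(Y)$; by uniqueness of the identity, $\psi(I_X) = I_Y$. Now $\phi(I_X)$ is an idempotent, hence so is its complement $I_Y - \phi(I_X)$, and \eqref{mainineq} yields $\|I_Y - \phi(I_X)\| = \|\psi(I_X) - \phi(I_X)\| < \|I_X\| = 1$. Since a non-zero idempotent always has norm at least one, the idempotent $I_Y - \phi(I_X)$ must vanish, i.e.\ $\phi(I_X) = I_Y$.

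The third assertion is where the real work lies, and the crux is to show that an \emph{algebra} isomorphism $\psi \colon \mc B(X) \to \mc B(Y)$ is necessarily \emph{spatial}, that is, of the form $\psi(A) = T A T^{-1}$ for some Banach space isomorphism $T \colon X \to Y$. This is the classical theorem of Eidelheit, but I would reconstruct it as follows. One first checks that $\psi$ carries minimal idempotents to minimal idempotents (if $e \mc B(X) e = \mathbb{C} e$ then $\psi(e) \mc B(Y) \psi(e) = \mathbb{C}\psi(e)$), so that rank one idempotents are sent to rank one idempotents. Fixing a norm one rank one idempotent $e = x_0 \otimes f_0$ with $\langle x_0, f_0 \rangle = 1$, write $\psi(e) = y_0 \otimes g_0$ with $\langle y_0, g_0\rangle = 1$. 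The left ideal $\mc B(X)e = \{x \otimes f_0 : x \in X\}$ is isometric to $X$, similarly $\mc B(Y)\psi(e) \cong Y$, and $\psi$ maps the former bijectively onto the latter; this produces a bounded bijection $T \colon X \to Y$, determined by $\psi(x \otimes f_0) = (Tx) \otimes g_0$, whose inverse is bounded because $\psi^{-1}$ is. A short computation starting from $\psi(A(x \otimes f_0)) = \psi(A)\psi(x \otimes f_0)$ then gives $TA = \psi(A)T$, i.e.\ $\psi(A) = TAT^{-1}$. Granting this, $\Ran(\psi(P)) = T[\Ran(P)] \simeq \Ran(P)$ for every idempotent $P$, since $T$ restricts to an isomorphism of $\Ran(P)$ onto its image; combining with the first assertion gives $\Ran(\phi(P)) \simeq \Ran(\psi(P)) \simeq \Ran(P)$, as desired. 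I expect the spatial representation of $\psi$ to be the main obstacle, the remaining identifications being routine.
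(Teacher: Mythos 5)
Your proposal is correct, and its skeleton matches the paper's proof: Zem\'anek's lemma (Lemma~\ref{zemanek}) plus Lemma~\ref{equivisom} for the first assertion, the identity $\psi(I_X)=I_Y$ plus the estimate $\|I_Y-\phi(I_X)\|<1$ for the second, and the spatial form of $\psi$ for the third. The differences are local. For the second assertion the paper finishes with the Neumann series: $\|I_Y-\phi(I_X)\|<1$ makes the idempotent $\phi(I_X)$ \emph{invertible}, hence equal to $I_Y$; your observation that $I_Y-\phi(I_X)$ is an idempotent of norm less than one is an equally valid one-line substitute. The real divergence is in the third assertion: the paper simply cites Eidelheit's Theorem (\cite{eid}, Theorem~2) to obtain $S\in\mc B(X,Y)$ with $\psi(A)=SAS^{-1}$, and then feeds the factorization $V=SP$, $U=PS^{-1}$ (so that $UV=P$ and $VU=\psi(P)$) into Lemma~\ref{equivisom} to get $\Ran(P)\simeq\Ran(\psi(P))$; you instead reconstruct Eidelheit's theorem from scratch --- preservation of minimal idempotents (which indeed uses surjectivity of $\psi$), the identification of the left ideal $\mc B(X)e$ with $X$, and the intertwining relation $TA=\psi(A)T$ --- and then conclude directly from $\Ran(\psi(P))=T[\Ran(P)]$ without a second appeal to Lemma~\ref{equivisom}. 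Your reconstruction is sound (note that boundedness of $T$ and $T^{-1}$ uses that ``isomorphism'' in this paper means homeomorphic algebra isomorphism, so $\psi$ and $\psi^{-1}$ are continuous; alternatively the closed graph theorem would serve), and it has the merit of being self-contained and of making visible exactly where rank-one idempotents enter; the paper's citation buys a three-line proof at the cost of invoking a classical result as a black box. Either route is acceptable.
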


\begin{proof}
As $\|P\| =1$, the estimate $\| \psi(P) - \phi(P) \| <1$ and Lemma~\ref{zemanek} imply $\psi(P) \sim \phi(P)$. In view of Lemma~\ref{equivisom} this is equivalent to saying $\Ran(\psi(P)) \simeq \Ran(\phi(P))$.

Suppose $\psi$ is surjective, then $\psi(I_X) = I_Y$. Indeed, this immediately follows as $\psi(I_X)A = A = A \psi(I_X)$ for each $A \in \mc B(Y)$. Therefore
\begin{align*}
\| I_Y - \phi(I_X) \| = \| \psi(I_X) - \phi(I_X) \| < 1,
\end{align*}
which by the Carl Neumann series implies that $\phi(I_X)$ is invertible in $\mc B (Y)$. As $\phi(I_X)$ is an idempotent, $\phi(I_X) = I_Y$ must hold.

Suppose $\psi$ is an isomorphism. By Eidelheit's Theorem (see \cite{eid}*{Theorem~2}) there is an isomorphism $S \in \mc B (X,Y)$ such that $\psi(A)= S   A   S^{-1}$ for each $A \in \mc B (X)$. In particular, $(S   P)   (P   S^{-1}) = S   P   S^{-1} = \psi(P)$ and $(P   S^{-1})   (S   P) =P$ imply (with Lemma~\ref{equivisom}) that $\Ran(P) \simeq \Ran(\psi(P))$. By the first part of the proposition $\Ran(\phi(P)) \simeq \Ran(P)$ follows.
\end{proof}

From this point on, we assume that the properties prescribed by the conditions of Theorem~\ref{main} hold for the Banach spaces $X$ and $Y$, and $\psi \colon \mc B (X) \rightarrow \mc B (Y)$ is assumed to be surjective. We recall that due to the deep automatic continuity result of B.~E.~Johnson \cite{johnson1967uniqueness}, any surjective homomorphism between algebras of operators of Banach spaces is automatically continuous (see \emph{e.g.} \cite{Dales}*{Theorem~5.1.5} for a detailed proof).

Outfitted with Theorem~\ref{main} and the results above, we can now prove Theorem~\ref{stability}.

\begin{proof}[Proof of Theorem~\ref{stability}]
We first observe that $\psi$ is automatically injective. Indeed, $Y$ is non-zero, hence $\psi$ is non-zero, since it is surjective. By Proposition~\ref{injorzero} it follows that $\psi$ is injective.

Thus by Lemma~\ref{injeq},  $\phi$ is injective too. Continuity of $\psi$ and \eqref{strictineq} imply that $\phi$ is continuous. Furthermore, from Proposition~\ref{corzemanek} we conclude that $\phi(I_X)= I_Y$ (which witnesses that $\Ran(\phi)$ contains an operator with dense range), and $\phi$ preserves rank one idempotents. Hence Theorem~\ref{main} applies.
\end{proof}

\subsection{The proof of Proposition~\ref{counterexample}}

In each of the following examples, $\mathcal{B}(X)$ has a character. In examples (1)--(3) this character is shown explicitly and in example (4) the character is obtained from a commutative quotient of $\mathcal{B}(X)$. We remark in passing that the list below is not intended to be comprehensive.

\begin{Ex}\label{nonex}
	Each of the following spaces $X$ is such that $\mc B (X)$ has a character:
	\begin{enumerate}
		\item[(1)] $X= \mathcal{J}_p$ where $1<p< \infty$ and $\mathcal{J}_p$ is the $p^{th}$ James space, since by \cite{edmit}*{Paragraph~8} $\mathcal{B}(X)$ has a character whose kernel is $\mathcal{W}(X)$, the \textit{ideal of weakly compact operators} (see also \cite{laustsenmax1}*{Theorem~4.16});
		\item[(2)] $X=C[0, \omega_1]$, where $\omega_1$ is the first uncountable ordinal, since by \cite{edmit}*{Paragraph~9} $\mathcal{B}(X)$ has a character (see also \cite{lw}*{Proposition~3.1});
		\item[(3)] $X=X_{GM}$ is the \textit{hereditarily indecomposable} Banach space constructed by Gowers and Maurey in \cite{GM0}, since $\mathcal{B}(X)$ has a character whose kernel is $\mathcal{S}(X)$, the ideal of \textit{strictly singular operators};
		\item[(4)] $X= \mathcal{G}$, where $\mathcal{G}$ is the Banach space constructed by Gowers in \cite{gowers1}, since $\mathcal{B}(X) / \mathcal{S}(X) \simeq \ell_{\infty} / c_0$, as shown in \cite{laustsenmax1}*{Corollary~8.3}.
	\end{enumerate}
\end{Ex}

\begin{proof}[Proof of Proposition~\ref{counterexample}]
Let $\chi \colon \mc B (X) \rightarrow \mathbb{C}$ be a character. Let $Z$ be a non-zero Banach space, and consider the map
\begin{align*}
 \phi \colon \mc B (X) \rightarrow \mc B (X \oplus Z); \quad T \mapsto
 \begin{bmatrix}
 T & 0\\
 0 & \chi(T) I_Z
 \end{bmatrix}. 
\end{align*}
From $\chi(I_X) = 1$ it is immediate that $\phi(I_X) = I_{X \oplus Z}$. As $\chi$ is a norm one algebra homomorphism, it readily follows that $\phi$ is a norm one algebra homomorphism too. The map $\phi$ is clearly injective. Let $x_0 \in X$ and $f_0 \in X^*$ satisfy $\langle x_0, f_0 \rangle \neq 0$. As $\chi$ is a character of $\mc B (X)$ and $\mc F (X)$ is the smallest non-trivial, two-sided ideal of $\mc B(X)$, we have $x_0 \otimes f_0 \in \mc F (X) \subseteq \Ker (\chi)$. Thus
\begin{align*}
\phi(x_0 \otimes f_0) =
\begin{bmatrix}
x_0 \otimes f_0 & 0\\
0 & 0
\end{bmatrix} =
\begin{bmatrix}
x_0 \\
0
\end{bmatrix} \otimes 
\begin{bmatrix}
f_0 \\
0
\end{bmatrix},
\end{align*}
from which it also follows that $\phi$ maps rank one operators into rank one operators. Finally, it is obvious that $\phi$ cannot be surjective.

The second part of the proposition is an immediate corollary of Examples~\ref{nonex} (1)--(2), the first part of the proposition with the choice $Z= \mathbb{C}$, and the fact that $X \simeq X \oplus \mathbb{C}$. Although the latter is certainly well-known, for completeness we give the details:

(1) \textit{Let $X= \mc{J}_p$, where $1<p< \infty$.} Recall that the James space is one-codimensional in its bidual and it is isometrically isomorphic to its bidual (see \textit{e.g.} \cite{albiackalton}*{Theorem~3.4.6}). Consequently $X \simeq X^{**} \simeq X \oplus \mathbb{C}$.

(2) \textit{Let $X= C[0, \omega_1]$.} As $X$ has a complemented copy of $c_0$ (see \cite{acgjm}*{Proposition~3.2}), and of course $c_0 \simeq c_0 \oplus \mathbb{C}$, we conclude $X \simeq X \oplus \mathbb{C}$.
\end{proof}

\begin{Rem}\label{conclrem}
	In light of Proposition~\ref{counterexample} and Example~\ref{nonex} let us make a few remarks about possible weakenings of the conditions in Theorem~\ref{main}. In the following $Y= X \oplus \mathbb{C}$.
\begin{itemize}
\item Let $X= X_{GM}$. It is shown \cite{GM0} that $X_{GM}$ is reflexive and has a Schauder basis, and hence $Y$ is separable and reflexive. This shows that in Theorem~\ref{main}, the conditions on $X$ cannot be weakened to ``$X$ is reflexive and has a Schauder basis''. 	
\item Let $X= \mc G$. It is shown in \cite{gowers1} that $\mc G$ has an unconditional Schauder basis, hence $Y$ is separable. This shows that in Theorem~\ref{main}, the conditions on $X$ and $Y$ cannot be weakened to ``$X$ has an unconditional basis and $Y$ is separable''.
\end{itemize}
\end{Rem}

\begin{Ack}
The authors are grateful to W.~B.~Johnson (Texas A$\&$M) for the useful e-mail exchanges regarding Proposition~\ref{exidemp}. They are also indebted to N.~J.~Laustsen (Lancaster) for his invaluable suggestions and for carefully reading the first draft of the manuscript. We are extremely grateful to the anonymous referee whose detailed suggestions and comments much improved the exposition of the paper.
\end{Ack}


\bibliographystyle{amsplain}


\Addresses

\end{document}